\newtheorem{theorem}{Theorem}
\newtheorem{proposition}[theorem]{Proposition}
\newtheorem{lemma}[theorem]{Lemma}
\newtheorem{claim}[theorem]{Claim}
\newcommand{\bigo}[1]{O\left(#1\right)}
\newcommand{\eps}{\varepsilon}
\newcommand{\given}{ \; | \; }
\renewcommand{\theenumi}{{\rm(\roman{enumi})}}
\DeclareMathOperator{\BFS}{BFS}
\DeclareMathOperator{\ex}{ex}
\title{Distance colouring without one cycle length}
\author{
 Ross J. Kang
\thanks{Radboud University Nijmegen, Netherlands. 
Email: {\tt ross.kang@gmail.com}.}
 \and
 Fran\c{c}ois Pirot
\thanks{LORIA, Vand\oe{}uvre-l\`es-Nancy, France; 
Email: {\tt francois.pirot@loria.fr}.}
}
\begin{document}
\maketitle

\begin{abstract}
We consider distance colourings in graphs of maximum degree at most $d$ and how excluding one fixed cycle length $\ell$ affects the number of colours required as $d\to\infty$. For vertex-colouring and $t\ge 1$, if any two distinct vertices connected by a path of at most $t$ edges are required to be coloured differently, then a reduction by a logarithmic (in $d$) factor against the trivial bound $O(d^t)$ can be obtained by excluding an odd cycle length $\ell \ge 3t$ if $t$ is odd or by excluding an even cycle length $\ell \ge 2t+2$. For edge-colouring and $t\ge 2$, if any two distinct edges connected by a path of fewer than $t$ edges are required to be coloured differently, then excluding an even cycle length $\ell \ge 2t$ is sufficient for a logarithmic factor reduction.
For $t\ge 2$, neither of the above statements are possible for other parity combinations of $\ell$ and $t$.
These results can be considered extensions of results due to Johansson (1996) and Mahdian (2000), and are related to open problems of Alon and Mohar (2002) and Kaiser and Kang (2014).

  
\end{abstract}


\section{Introduction}\label{sec:intro}

For a positive integer $t$, the {\em $t$-th power} $G^t$ of a (simple) graph $G = (V,E)$ is a graph with vertex set $V$ in which two distinct elements of $V$ are adjacent in $G^t$ if there is a path in $G$ of length at most $t$ between them.
The {\em line graph} $L(G)$ of a graph $G=(V,E)$ is a graph with vertex set $E$ in which two distinct elements are adjacent in $L(G)$ if the corresponding edges of $G$ have a common endpoint.
The {\em distance-$t$ chromatic number} $\chi_t(G)$, respectively, {\em distance-$t$ chromatic index} $\chi'_t(G)$, of $G$ is the chromatic number of $G^t$, respectively, of $(L(G))^t$. 
(So $\chi_1(G)$ is the chromatic number $\chi(G)$ of $G$, $\chi'_1(G)$ the chromatic index $\chi'(G)$ of $G$, and $\chi'_2(G)$ the strong chromatic index $\chi'_s(G)$ of $G$.)

The goal of this work is to address the following basic question.
What is the largest possible value of $\chi_t(G)$ or of $\chi'_t(G)$ among all graphs $G$ with maximum degree at most $d$ that do not contain the cycle $C_\ell$ of length $\ell$ as a subgraph?
For both parameters, we are interested in finding those choices of $\ell$ (depending on $t$) for which there is an upper bound that is $o(d^t)$ as $d\to\infty$.
(Trivially $\chi_t(G)$ and $\chi'_t(G)$ are $O(d^t)$ since the maximum degrees $\Delta(G^t)$ and $\Delta((L(G))^t)$ are $O(d^t)$ as $d\to\infty$.
Moreover, by probabilistic constructions~\cite{AlMo02,KaKa14}, these upper bounds must be $\Omega(d^t/\log d)$ as $d\to \infty$ regardless of the choice of $\ell$.)
We first discuss some previous work.

For $t=1$ and $\ell=3$, the question for $\chi_t$ essentially was a long-standing problem of Vizing~\cite{Viz68}, one that provoked much work on the chromatic number of triangle-free graphs, and was eventually settled asymptotically by Johansson~\cite{Joh96}. He used nibble methods to show that the largest chromatic number over all triangle-free graphs of maximum degree at most $d$ is $\Theta(d/\log d)$ as $d\to \infty$. It was observed in~\cite{KaPi16} that this last statement with $C_\ell$-free, $\ell>3$, rather than triangle-free also holds, thus completely settling this question asymptotically for $\chi_1=\chi$.

Regarding the question for $\chi'_t$, first notice that since the chromatic index of a graph of maximum degree $d$ is either $d$ or $d+1$, there is little else to say asymptotically if $t=1$.

For $t=2$ and $\ell=4$, the question for $\chi'_t$ was considered by Mahdian~\cite{Mah00} who showed that the largest strong chromatic chromatic index over all $C_4$-free graphs of maximum degree at most $d$ is $\Theta(d^2/\log d)$ as $d\to \infty$.
Vu~\cite{Vu02} extended this to hold for any fixed bipartite graph instead of $C_4$, which in particular implies the statement for any $C_\ell$, $\ell$ even. Since the complete bipartite graph $K_{d,d}$ satisfies $\chi'_2(K_{d,d}) = d^2$, the statement does not hold for $C_\ell$, $\ell$ odd.
This completely settles the second question asymptotically for $\chi'_2=\chi'_s$.

In this paper, we advance a systematic treatment of our basic question.
Our main results are as follows, which may be considered as extensions of the results of Johansson~\cite{Joh96} and Mahdian~\cite{Mah00} to distance-$t$ vertex- and edge-colouring, respectively, for all $t$.

\begin{theorem}
\label{thm:even}
Let $t$ be a positive integer and $\ell$ an even positive integer.
\begin{enumerate}
\item\label{thm:even,vertex}
For $\ell \ge 2t+2$, the supremum of the distance-$t$ chromatic number over $C_\ell$-free graphs of maximum degree at most $d$ is $\Theta(d^t/\log d)$ as $d\to \infty$.
\item\label{thm:even,edge}
For $t\ge 2$ and $\ell \ge 2t$, the supremum of the distance-$t$ chromatic index over $C_\ell$-free graphs of maximum degree at most $d$ is $\Theta(d^t/\log d)$ as $d\to \infty$.
\end{enumerate}
\end{theorem}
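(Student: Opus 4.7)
The lower bound $\Omega(d^t/\log d)$ holds for every even $\ell$ via the probabilistic constructions of~\cite{AlMo02,KaKa14}, so only the upper bound requires work. For this, the plan is to invoke a locally-sparse Johansson-type theorem: if a graph $H$ has maximum degree $\Delta$ and every vertex neighbourhood in $H$ spans at most $\Delta^2/f$ edges, then $\chi(H) = O(\Delta/\log f)$. Taking $H = G^t$ for part~\ref{thm:even,vertex} and $H = (L(G))^t$ for part~\ref{thm:even,edge}, the trivial bound $\Delta(H) = O(d^t)$ reduces the problem to establishing that each open neighbourhood in $H$ induces at most $O(d^{2t-1})$ edges; with $f = \Omega(d)$ the theorem then delivers $\chi(H) = O(d^t/\log d)$.

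The core combinatorial step is therefore to prove this local-sparsity bound. Fix a vertex $v \in V(G)$ (or an edge $e$ for part (ii)) and run $\BFS$ in $G$ from $v$, obtaining layers $L_0, L_1, \ldots, L_t$; the $t$-ball $B_t(v)$ has $O(d^t)$ vertices and contains the whole $G^t$-neighbourhood of $v$. An edge of $G^t$ inside $B_t(v)$ is a pair $\{x,y\}$ with some $G$-path $P_{xy}$ of length $k \le t$, and concatenation with the BFS paths $P_{vx}$ and $P_{vy}$ of respective lengths $i, j \le t$ produces a closed walk through $v$ of length $i+j+k$. The key step is to show that, except for a controlled number of degenerate pairs, the triple $(P_{vx}, P_{vy}, P_{xy})$ can be arranged (possibly after local rerouting along BFS tree branches) to give a simple cycle of length exactly $\ell$, contradicting $C_\ell$-freeness. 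The evenness of $\ell$ aligns the parity constraints inherent in BFS layering, and the inequality $\ell \ge 2t+2$ (resp.\ $\ell \ge 2t$) supplies the necessary slack to hit length $\ell$ exactly.

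The principal obstacle is the degenerate case where the three paths share internal vertices so that no simple cycle is directly obtained. I would handle this by showing that every degeneracy forces $x$ and $y$ to share a common BFS ancestor at some layer $L_s$ with $s \ge 1$; summing over the shared ancestor, the number of such pairs is at most $\sum_{s \ge 1} |L_s| \cdot (d^{t-s})^2 = O(d^{2t-1})$, which exactly matches the target. Part~\ref{thm:even,edge} is proved by the analogous BFS argument in $L(G)$ starting from an edge $e$; the drop in threshold from $2t+2$ to $2t$ reflects that the $L(G)$-distance between two edges is one less than the length of the shortest $G$-path containing both as extremes, so two extra units of slack become available when closing the walk. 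The regime $\ell > 3t$, where the basic three-path closed walk is too short to form $C_\ell$ directly, is absorbed by extending the BFS to depth $\ell/2$ and counting configurations of longer closed walks, or by a trivial bound in the sub-case when $\Delta(G^t)$ is atypically small relative to $d^t$.
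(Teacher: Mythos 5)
Your framework (AKS locally-sparse lemma, reduce to a local-sparsity bound) and lower-bound citation match the paper, but the core combinatorial step is not the paper's and, as written, has a real gap. Your plan is to show that a pair $x,y\in A_t$ joined by a $t$-path, together with the two BFS geodesics to $x$ and $y$, yields a closed walk that ``can be arranged to give a simple cycle of length exactly $\ell$.'' Nothing supports this. From such a configuration you can extract some cycle of some even length up to roughly $3t$, but there is no mechanism to hit a prescribed even length $\ell$ exactly; parity of the BFS layering rules out odd cycles but gives no control over which even length occurs. The paper's Theorem~\ref{thm:odd} does use a BFS common-ancestor argument of the kind you describe, but there the specific length $\ell=3t+2k$ is engineered via the ``$j$-implantable'' bookkeeping, which has no analogue in your sketch. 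Your fallback for $\ell>3t$ (``extend BFS to depth $\ell/2$'' or ``a trivial bound when $\Delta(G^t)$ is small'') is not an argument.

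There is a second, quantitative problem: you claim the neighbourhood in $G^t$ spans $O(d^{2t-1})$ edges, i.e.\ $f=\Omega(d)$. The paper does \emph{not} prove this. It proves the weaker bound $O(d^{2t-\eps})$ with $\eps=(\ell-2t)/\ell$ for the vertex case (resp.\ $\eps=(\ell-2t+2)/\ell$ for the edge case), which still gives $\log f = \Theta(\log d)$ but is much smaller than $1$ when $\ell$ is close to $2t+2$. The route to this bound is entirely different from yours: the paper applies the Bondy--Simonovits bound $\ex(n,C_{2k})=O(n^{1+1/k})$ (in Pikhurko's quantitative form, inequality~\eqref{eqn:pikhurko}) to $G[A_{t-1}\cup A_t]$, together with a bespoke refinement (Lemma~\ref{lem:turanbipartite}) counting ``$\delta$-bunched'' edges in the bipartite layer graph $G_{t-1}=G[A_{t-1},A_t]$, and then does a case analysis on the position of the penultimate vertex of the $t$-path. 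The even-cycle Tur\'an machinery is precisely what replaces your ``force a $C_\ell$'' step: instead of producing a forbidden cycle from a single configuration, it bounds the number of edges that could occur in any $C_\ell$-free graph on the relevant vertex sets. Without some substitute for this global density bound, the non-degenerate pairs in your count are unaccounted for, and the argument does not close.
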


\begin{theorem}
\label{thm:odd}
Let $t$ and $\ell$ be odd positive integers such that $\ell \ge 3t$.
The supremum of the distance-$t$ chromatic number over $C_\ell$-free graphs of maximum degree at most $d$ is $\Theta(d^t/\log d)$ as $d\to \infty$.
\end{theorem}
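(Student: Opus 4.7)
\emph{Lower bound.} The bound $\Omega(d^t/\log d)$ follows from the probabilistic constructions already cited in the introduction (Alon--Mohar, Kaiser--Kang); these give $C_\ell$-free graphs achieving this distance-$t$ chromatic number irrespective of the parity or value of $\ell$, so nothing new is needed here.

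\emph{Upper bound: reduction to local sparsity.} The plan is to apply a ``locally sparse implies small chromatic number'' theorem in the spirit of Johansson and Alon--Krivelevich--Sudakov: if a graph $H$ has maximum degree at most $\Delta$ and every neighbourhood $H[N_H(v)]$ carries at most $\Delta^2/f$ edges, then $\chi(H) = O(\Delta/\log f)$. I would apply this to $H = G^t$, for which $\Delta(G^t) \le D := \sum_{i=1}^{t} d(d-1)^{i-1} = O(d^t)$. To obtain $\chi(G^t) = O(d^t/\log d)$ it therefore suffices to show, for some fixed $\varepsilon > 0$, that every neighbourhood $G^t[N_{G^t}(v)]$ contains only $O(d^{2t-\varepsilon})$ edges, so that $\log f = \Theta(\log d)$.

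\emph{Counting close pairs around $v$.} An edge of $G^t[N_{G^t}(v)]$ is a pair $\{u,w\}$ with $a := d_G(v,u) \le t$, $b := d_G(v,w) \le t$ and $c := d_G(u,w) \le t$. Fixing shortest paths $P_{vu}$, $P_{vw}$, $P_{uw}$, their union carries a closed walk of length $a+b+c \le 3t$; when the three paths meet only at the expected endpoints, this walk is a cycle of length exactly $a+b+c$ through $v$, $u$, $w$. I would stratify pairs by the triple $(a,b,c)$ and by how much the three shortest paths overlap. The trivial estimate gives $D^2 = O(d^{2t})$, so the task is to extract a factor $d^\varepsilon$ from the $C_\ell$-free hypothesis.

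\emph{Role of the parity conditions and main obstacle.} Because $\ell$ is odd and $\ell \ge 3t$ with $t$ odd, the only triple $(a,b,c)$ with $1 \le a,b,c \le t$ and $a+b+c = \ell$ is the ``tight'' triple $(t,t,t)$ with $\ell = 3t$; this immediately forbids generic three-path configurations in the outermost regime. The harder case is $\ell \ge 3t+2$, where a single three-path configuration never has length $\ell$. My approach here is to combine several close pairs: if there are too many pairs $\{u,w\}$ in $N_{G^t}(v)$, then by a pigeonhole/dependent-random-choice argument one can find two pairs whose associated paths share a well-chosen prefix from $v$, and splice the remaining pieces together to form a cycle whose length one can tune — using the freedom in $a,b,c \le t$ and the oddness of $t$ — to land exactly on the odd value $\ell$. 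The hard part will be this splicing argument: I must control the possible collapses of path-unions (which only produce \emph{shorter} cycles, whose lengths need not be $\ell$) and check that the generic, non-degenerate configuration dominates the count, so that exceeding the sparsity threshold genuinely forces a copy of $C_\ell$. Once this combinatorial lemma is in place, feeding it into the locally sparse colouring theorem delivers the bound $O(d^t/\log d)$.
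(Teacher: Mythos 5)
Your reduction is set up correctly: the lower bound citation is right, the AKS-type locally-sparse colouring lemma is the right tool, and the goal of showing that $G^t[N_{G^t}(v)]$ spans $O(d^{2t-\eps})$ edges (for the paper, $\eps=1$ suffices) is exactly what the paper does. You also correctly observe that $(a,b,c)=(t,t,t)$ is the only triple summing to $\ell$ when $\ell=3t$, which explains why the tight case is in some sense cleanest.

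However, there is a genuine gap, and you have flagged it yourself: the ``splicing argument'' that is supposed to turn many close pairs into a copy of $C_\ell$ for $\ell>3t$ is not carried out, and it is precisely the mathematical content of the theorem. The difficulty you name --- controlling collapses of path unions, tuning the cycle length to land exactly on $\ell$ --- is real, and dependent random choice or pigeonhole on shared prefixes from $v$ does not obviously resolve it, because you have little control over where two generic shortest paths from $v$ branch or how the forked pieces recombine with a third short path into a cycle of prescribed odd length.

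The paper's actual mechanism is quite different and worth internalising. It fixes a BFS tree rooted at the centre $x$ and counts only \emph{peripheral} length-$t$ paths with both endpoints in $A_t$ (all interior vertices in $A_{\ge t}$); the preliminary reduction shows this loses only lower-order terms. It then introduces \emph{$j$-implantability}: a vertex $v\in A_t$ is $j$-implantable if it carries a peripheral path of length $j$ ending back in $A_t$. Writing $\ell=3t+2k$, the key step is: if $v$ is $2k$-implantable with witness path $P$, and if two endpoints $y_1,y_2$ of peripheral $t$-paths from $v$ (disjoint from $P$) had lowest common BFS-ancestor $x$, then the two BFS branches (total length $2t$), one peripheral $t$-path, and the implantability handle $P$ (length $2k$) concatenate into a cycle of length exactly $t+t+t+2k=\ell$ --- a contradiction. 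Hence all such $y$ share an ancestor in $A_1$, giving $|Y|\le d^{t-1}$ and hence $O(d^{2t-1})$ pairs. The remaining count over non-$2k$-implantable endpoints is handled by an induction on the implantability parameter $j$ in residue classes mod $t$. This implantability-plus-LCA device is the idea your proposal is missing; the ``extra'' $2k$ in $\ell$ is absorbed not by tuning path lengths but by pre-attaching a fixed peripheral handle of length $2k$ before invoking the tree argument.
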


This study was initiated by a conjecture of ours in~\cite{KaPi16}, that the largest distance-$t$ chromatic number over all $C_{2t+2}$-free graphs of maximum degree at most $d$ is $\Theta(d^t/\log d)$ as $d\to \infty$.
Theorem~\ref{thm:even}\ref{thm:even,vertex} confirms our conjecture.

In Section~\ref{sec:constructions}, we exhibit constructions to certify the following, so improved upper bounds are impossible for the parity combinations of $t$ and $\ell$ other than those in Theorems~\ref{thm:even} and~\ref{thm:odd}.

\begin{proposition}\label{prop:constructions}
Let $t$ and $\ell$ be positive integers.
\begin{enumerate}
\item\label{prop:constructions,odd}
For $t$ even and $\ell$ odd, the supremum of the distance-$t$ chromatic number over $C_\ell$-free graphs of maximum degree at most $d$ is $\Theta(d^t)$ as $d\to \infty$.
\item\label{prop:constructions,odd,index}
For $t\ge 2$ and $\ell$ odd, the supremum of the distance-$t$ chromatic index over $C_\ell$-free graphs of maximum degree at most $d$ is $\Theta(d^t)$ as $d\to \infty$.
\end{enumerate}
\end{proposition}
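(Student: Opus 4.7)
The plan is as follows. In both parts, the trivial upper bounds $\chi_t(G) \le \Delta(G^t)+1 = O(d^t)$ and $\chi'_t(G) \le \Delta((L(G))^t)+1 = O(d^t)$ already supply the $O(d^t)$ direction, so only matching $\Omega(d^t)$ constructions are needed. The key observation powering the whole proof is that every bipartite graph is $C_\ell$-free for every odd $\ell$, so I may restrict attention to bipartite constructions.

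For part~\ref{prop:constructions,odd}, I would start from $H_q$, the bipartite point--line incidence graph of the projective plane $PG(2,q)$. This is $(q+1)$-regular and bipartite, and any two point-vertices lie on a unique common line, hence are at distance exactly $2$ in $H_q$. The $q^2+q+1$ point-vertices thus form a clique in $H_q^2$, giving $\chi_2(H_q) = \Omega(d^2)$ with $d = q+1$. For $t=2k$ even, I would take the $k$-fold Cartesian product $G = H_q^{\Box k}$, which remains bipartite and is $k(q+1)$-regular. Since distances add in the Cartesian product, the Cartesian power of the point side is a clique of size $(q^2+q+1)^k$ in $G^{2k}$, yielding $\chi_{2k}(G) = \Omega_t(d^{2k})$ (for fixed $t$, with $d \asymp kq$).

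For part~\ref{prop:constructions,odd,index}, I would use $G = K_{d,d}$ when $t=2$, so that $\chi'_2(K_{d,d}) = d^2$. For $t\ge 3$, I would take $G$ to be a bipartite $d$-regular graph on $n = \Theta(d^{t-1})$ vertices of diameter at most $t-1$; explicit examples come from generalized polygons of order $(q,q)$ for $t\in\{3,4,6\}$, and for arbitrary $t$ one can invoke the existence of such $G$ via random bipartite $d$-regular graphs, whose diameter concentrates around $\log_{d-1} n \approx t-1$. In such a $G$, the line graph $L(G)$ has diameter at most $t$, so $(L(G))^t$ is complete on $|E(G)| = nd/2 = \Omega(d^t)$ vertices, whence $\chi'_t(G) = \Omega(d^t)$.

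The hardest step will be guaranteeing, for every $t$, a bipartite $d$-regular graph of small enough diameter in part~\ref{prop:constructions,odd,index}; this is trivial for $t=2$ and classical for $t\in\{3,4,6\}$, but for other $t$ it rests on a (standard) random construction whose diameter estimate needs to be carefully tailored.
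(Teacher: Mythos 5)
Your part~\ref{prop:constructions,odd} is correct and takes a genuinely different route from the paper. The paper builds an explicit "cyclic" construction on $\cup_{i=0}^{t-1}[d/2]^t$ (Proposition~\ref{lem:construction}), whereas you take Cartesian powers $H_q^{\Box k}$ of the incidence graph of $PG(2,q)$. Your argument is sound: distances are additive in the Cartesian product, every pair of points of $PG(2,q)$ is at distance $0$ or $2$, so the $(q^2+q+1)^k$ point $k$-tuples form a clique in $G^{2k}$ while $G$ remains bipartite and $k(q+1)$-regular. (You should note that $q$ must be a prime power, so one needs the standard Bertrand-type density argument to get all $d$; this is a minor point.)

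Part~\ref{prop:constructions,odd,index} has a genuine gap for $t\notin\{2,3,4,6\}$. First, a small inaccuracy: you do not need overall diameter $\le t-1$, only that the appropriate-parity diameter is $\le t-1$ (same-side pairs if $t-1$ is even, opposite-side pairs if $t-1$ is odd). Indeed ${\cal Q}_{q}$ and ${\cal H}_{q}$ have diameter $4$ and $6$ respectively, not $3$ and $5$, yet they still yield complete $(L(G))^4$ and $(L(G))^6$ because their opposite-side diameters are $3$ and $5$. More seriously, the random bipartite $d$-regular construction does not supply what you need. A random $d$-regular bipartite graph on $\Theta(d^{t-1})$ vertices has diameter concentrated around $\log_{d-1}(n\log n)+O(1)$, which is about $t$ or $t+1$, not $t-1$; equivalently, with $m=\Theta(d^{t-1})$ vertices per side the expected number of same-side pairs at distance $> t-1$ is $\Theta(m^2 e^{-\Theta(1)})\to\infty$, and a second-moment argument shows such pairs exist w.h.p. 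So $(L(G))^t$ is not complete, and pruning to a large clique is not immediate. The paper avoids this entirely by an explicit construction: the $(t-1)$-fold balanced bipartite product $\bowtie^{t-1}K_{d',d'}$ with $d'=(d-1)/(t-1)+1$ (Proposition~\ref{prop:edge-diameter}), which is $d$-regular, bipartite, has $\Theta_t(d^{t-1})$ vertices per side, and by Proposition~\ref{prop:productdistance} has the right-parity diameter exactly $t-1$. (The paper also offers an alternative combining Proposition~\ref{lem:construction}, $K_{d,d}$, and a $\bowtie$-based construction for $t\ge 4$ even.) To repair your proof you would need to replace the random construction with such an explicit product construction, or else supply a substantially stronger diameter guarantee than random regular graphs provide.
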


We have reason to suspect that the values $2t+2$ and $2t$, respectively, may not be improved to lower values in Theorem~\ref{thm:even}, but we do not go so far yet as to conjecture this.
We also wonder whether the value $3t$ in Theorem~\ref{thm:odd} is optimal --- it might well only be a coincidence for $t=1$ --- but we know that in general it may not be lower than $t$, as we show in Section~\ref{sec:constructions}.

Our basic question in fact constitutes refined versions of problems of Alon and Mohar~\cite{AlMo02} and of Kaiser and the first author~\cite{KaKa14}, which instead asked about the asymptotically extremal distance-$t$ chromatic number and index, respectively, over graphs of maximum degree $d$ and girth at least $g$ as $d\to\infty$. Our upper bounds imply bounds given earlier in~\cite{AlMo02,KaKa14,KaPi16}, and the lower bound constructions given there are naturally relevant here (as we shall see in Section~\ref{sec:constructions}).

It is worth pointing out that the basic question unrestricted, i.e.~asking for the extremal value of the distance-$t$ chromatic number or index over graphs of maximum degree $d$ as $d\to\infty$, is likely to be very difficult if we ask for the precise (asymptotic) multiplicative constant. This is because the question for $\chi_t$ then amounts to a slightly weaker version of a well-known conjecture of Bollob\'as on the degree--diameter problem~\cite{Bol78}, while the question for $\chi'_t$ then includes the notorious strong edge-colouring conjecture of Erd\H{o}s and Ne\v{s}et\v{r}il, cf.~\cite{Erd88}, as a special case.

Our proofs of Theorems~\ref{thm:even} and~\ref{thm:odd} rely on direct applications of the following result of Alon, Krivelevich and Sudakov~\cite{AKS99}, which bounds the chromatic number of a graph with bounded neighbourhood density.

\begin{lemma}[\cite{AKS99}]\label{lem:AKS}
For all graphs $\hat{G} = (\hat{V},\hat{E})$ with maximum degree at most $\hat{\Delta}$ such that for each $\hat{v}\in \hat{V}$ there are at most $\frac{\hat{\Delta}^2}{f}$ edges spanning $N(\hat{v})$, it holds that $\chi(\hat{G}) = \bigo{\frac{\hat{\Delta}}{\log f}}$ as $\hat{\Delta}\to\infty$.
\end{lemma}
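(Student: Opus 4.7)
The plan is to adapt the semi-random (R\"odl nibble / Kahn--Johansson) method. Fix $k = \lceil \alpha \hat\Delta / \log f \rceil$ for a small absolute constant $\alpha > 0$ as the target number of colours. I would then run an iterative random procedure: at round $i$, every still-uncolored vertex $v$ has a list $L_{i-1}(v) \subseteq \{1, \dots, k\}$ of available colours (initialized at $L_0(v) = \{1,\ldots,k\}$) and tentatively picks a colour $c \in L_{i-1}(v)$ uniformly at random with some activation probability $p$. A picked colour is kept permanently if and only if no neighbor of $v$ picked the same colour; colours kept by a neighbor of $w$ are deleted from $L_{i-1}(w)$. The aim is that after $T = O(\log \log \hat\Delta)$ rounds the residual uncolored subgraph can be finished greedily in the remaining colours.

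The heart of the argument is to track $\ell_i(v) := |L_i(v)|$ and $d_i(v) := |\{w \in N(v) : w \text{ is still uncolored}\}|$, maintaining the invariants $\ell_i(v) \ge \ell_i^*$ and $d_i(v) \le d_i^*$ for deterministic sequences $\ell_i^*, d_i^*$ whose ratio $d_i^*/\ell_i^*$ decays geometrically in $i$. The local sparsity assumption enters the joint drift analysis of $\ell_i$ and $d_i$: with at most $\hat\Delta^2/f$ edges in $N(v)$, the vast majority of pairs of neighbors of $v$ are non-adjacent and may safely share a colour, so more neighbors successfully get coloured in one round (pushing $d_i$ down faster) while several of them may remove the \emph{same} colour from $L(v)$ (keeping the distinct-colour loss comparatively small). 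A careful calculation converts this asymmetry into a multiplicative improvement of order $1/\log f$ per round on the ratio $d_i / \ell_i$, enough to reach $d_i \le \ell_i/2$ within $O(\log \log \hat\Delta)$ rounds.

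To bootstrap expected behaviour into almost-sure behaviour, I would apply Talagrand's inequality to obtain concentration of $\ell_i(v)$ and $d_i(v)$ about their means, and then use the symmetric Lov\'asz Local Lemma on the bad events ``some invariant fails at $v$ in round $i$''. Each such event depends only on the random choices inside a ball of bounded radius around $v$, so the dependency graph has degree polynomial in $\hat\Delta$ and the LLL applies comfortably.

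The principal obstacle is quantitative bookkeeping: the per-round multiplicative advantage on $d_i/\ell_i$ is only $\Theta(1/\log f)$, so the activation probability $p$, the number of rounds $T$, and the concentration windows must be calibrated carefully enough that the geometric product of per-round drifts outruns the accumulated concentration errors before either the lists shrink to zero or the LLL hypothesis breaks down. Once this standard nibble calibration is settled, the residual uncolored graph has degree at most half its list size and can be finished greedily, yielding $\chi(\hat G) = O(\hat\Delta / \log f)$ as $\hat\Delta \to \infty$.
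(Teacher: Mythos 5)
The paper does not prove this lemma: it is quoted verbatim from Alon, Krivelevich, and Sudakov, and the paper explicitly notes that their argument proceeds by reducing to Johansson's theorem for triangle-free graphs --- one randomly partitions $\hat V$ into classes so that each class induces a graph of much smaller degree whose vertex neighbourhoods are nearly edge-free, then invokes Johansson on each class and sums --- whereas Vu later gave a direct nibble proof, which is what additionally yields the list-chromatic version the paper relies on in the sentence following the lemma. Your proposal is precisely that direct-nibble route, so it is a genuinely different argument from the one the paper actually cites: the nibble is self-contained and gives list colouring for free, while the AKS partition argument is shorter if one is willing to take Johansson as a black box.

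As written, though, your sketch has two quantitative gaps. First, a per-round multiplicative improvement of order $1-\Theta(1/\log f)$ on $d_i/\ell_i$ cannot reach $d_i \le \ell_i/2$ in $O(\log\log\hat\Delta)$ rounds: since $\ell_0 = k = \Theta(\hat\Delta/\log f)$ and $d_0\le\hat\Delta$, the starting ratio is $\Theta(\log f)$, and driving it down to a constant at rate $1-c/\log f$ per round takes $\Theta(\log f\cdot\log\log f)$ rounds. Either the per-round gain must be stronger than you state, or the round budget must be recalibrated, and this interacts with the cumulative list depletion and concentration error you are also trying to control. Second, the Local Lemma step is more delicate than ``each bad event depends only on a bounded-radius ball, so the dependency degree is $\mathrm{poly}(\hat\Delta)$.'' The event that $v$'s invariant fails at round $i$ depends on random choices within radius $\Theta(i)$ of $v$, so a single LLL application across all $T$ rounds faces dependency degree $\hat\Delta^{\Theta(T)}$, which is super-polynomial once $T$ grows with $\hat\Delta$. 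The standard remedy, used by Johansson and by Vu, is to condition on the state after round $i-1$ and apply Talagrand plus the LLL freshly within each round. Neither obstruction is fatal --- Vu carried the programme through rigorously --- but the ``standard nibble calibration'' you defer is exactly where the proof lives, and the numbers in your outline do not yet fit together.
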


\noindent
The proof of this result in~\cite{AKS99} invoked Johannson's result for triangle-free graphs; using nibble methods directly instead, Vu~\cite{Vu02} extended it to hold for list colouring. So Theorems~\ref{thm:even} and~\ref{thm:odd} also hold with list versions of $\chi_t$ and $\chi'_t$.

Section~\ref{sec:proofs} is devoted to showing the requisite density properties for Lemma~\ref{lem:AKS}.
In order to do so with respect to Theorem~\ref{thm:even}, we in part use a classic result of Bondy and Simonovits~\cite{BoSi74} that the Tur\'an number $\ex(n,C_{2k})$ of the even cycle $C_{2k}$, that is, the maximum number of edges in a graph on $n$ vertices not containing $C_{2k}$ as a subgraph, satisfies $\ex(n,C_{2k})=O(n^{1+1/k})$ as $n\to\infty$. We also use a technical refinement which we describe and prove in Section~\ref{sec:proofs}.

We made little effort to optimise the multiplicative constants implicit in Theorems~\ref{thm:even} and~\ref{thm:odd} and in Proposition~\ref{prop:constructions}. Importantly, the constants we obtained depend on $\ell$ or $t$, and it is left to future work to determine the nature of the true precise dependencies.

\section{Constructions}\label{sec:constructions}
In this section, we describe some constructions that certify the conclusions of Theorems~\ref{thm:even} and~\ref{thm:odd} are not possible with other parity combinations of $t$ and $\ell$, in particular showing Proposition~\ref{prop:constructions}.

First we review constructions we used in previous work~\cite{KaPi16}.
In combination with the trivial bound $\chi_t(G) =O(d^t)$ if $\Delta(G)\le d$, the following two propositions imply Proposition~\ref{prop:constructions}\ref{prop:constructions,odd}.
The next result also shows that the value $3t$ in Theorem~\ref{thm:odd} may not be reduced below $t$.

\begin{proposition}\label{lem:construction}
Fix $t \ge 3$.
For every even $d \ge 2$, there exists a $d$-regular graph $G$ with $\chi_t(G) \ge d^t/2^t$ and $\chi'_{t+1}(G) \ge d^{t+1}/2^t$. Moreover, $G$ is bipartite if $t$ is even, and $G$ does not contain any odd cycle of length less than $t$ if $t$ is odd.
\end{proposition}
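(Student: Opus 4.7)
\medskip
\noindent\emph{Proof proposal.} The plan is to exhibit a $d$-regular bipartite graph $G$ on sufficiently many vertices with small diameter, so that both $G^t$ and $(L(G))^{t+1}$ become complete graphs, yielding both lower bounds simultaneously.

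First, I would construct a $d$-regular bipartite graph $G$ on $n$ vertices with $n \geq d^t/2^{t-1}$ and $\operatorname{diam}(G) \leq t$. This sits well inside the Moore-type bound, which permits a $d$-regular graph of diameter $t$ to have up to roughly $(d-1)^t$ vertices; our target $n \sim d^t/2^{t-1}$ leaves a factor of about $2^{t-1}$ of slack. Concretely, such a $G$ can be obtained as a bipartite double cover of a well-chosen Cayley graph of an abelian group with $d/2$ generators, or probabilistically as a random $d$-regular bipartite graph, whose diameter is typically $\sim \log_{d-1} n \leq t$.

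With such a $G$ in hand, the rest follows in three short steps. Bipartiteness of $G$ rules out all odd cycles, simultaneously ensuring ``bipartite if $t$ is even'' and ``no odd cycle of length less than $t$ if $t$ is odd''. Since $\operatorname{diam}(G) \leq t$, any two vertices of $G$ are joined by a path of length at most $t$, so $G^t$ is complete on $V(G)$ and
\[ \chi_t(G) = |V(G)| \geq d^t/2^{t-1} \geq d^t/2^t. \]
For the chromatic index bound, I would use the standard inequality $\operatorname{diam}(L(G)) \leq \operatorname{diam}(G) + 1$: given edges $e, f$ of $G$, prepending $e$ and appending $f$ to a shortest $G$-path between an endpoint of $e$ and an endpoint of $f$ yields a walk in $L(G)$ of length at most $t+1$. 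Hence $(L(G))^{t+1}$ is complete on $|E(G)| = dn/2$ vertices, so
\[ \chi'_{t+1}(G) = \tfrac{dn}{2} \geq d^{t+1}/2^t. \]

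The main obstacle is the first step, namely producing such a $G$ for \emph{every} even $d \geq 2$. For large $d$ a probabilistic construction is routine; small cases such as $d = 2$ (where any sufficiently long even cycle works) can be handled by direct ad hoc examples, and a uniform explicit description is the only delicate point of the argument.
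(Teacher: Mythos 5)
Your plan has a fatal obstruction for odd $t$. You insist on a \emph{bipartite} $G$ (so that the parity clauses come for free), but a $d$-regular bipartite graph of diameter at most $t$ with $t$ odd simply cannot have $n \approx d^t$ vertices. Indeed, if $G$ has parts $A$, $B$ with $|A|=|B|=n/2$ and $v\in A$, then every vertex of $A$ is at \emph{even} distance from $v$, hence at distance at most $t-1$. Summing the Moore-type bound over the even levels gives
\[
|A| \;\le\; 1 + d(d-1) + d(d-1)^3 + \cdots + d(d-1)^{t-2} \;=\; O\!\left(d^{t-1}\right),
\]
so $n = O(d^{t-1})$, which is strictly smaller in order than the target $d^t/2^{t-1}$ as $d\to\infty$. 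Your quoted slack against ``roughly $(d-1)^t$'' is the Moore bound for \emph{general} graphs; the bipartite bound for odd diameter loses a full factor of $d$, and no amount of constant slack recovers it. (This is exactly why the paper's construction for odd $t$ is not bipartite: it only needs to avoid odd cycles of length less than $t$, which it does by wrapping $t$ bipartite layers around a cycle of odd length $t$.) Relatedly, you are asking for more than the statement requires: the paper only exhibits a clique of size $(d/2)^t$ inside $G^t$, whereas you want $G^t$ to be complete, i.e.\ $\operatorname{diam}(G)\le t$ — a substantially stronger (and, for odd $t$ with bipartite $G$, impossible) demand.

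For $t$ even the diameter/Moore arithmetic is at least consistent, and your reductions from $\operatorname{diam}(G)\le t$ to the two lower bounds (including $\operatorname{diam}(L(G))\le \operatorname{diam}(G)+1$) are correct. But even there, producing a $d$-regular bipartite graph with $n \ge d^t/2^{t-1}$ and diameter at most $t$ for \emph{every} even $d\ge 2$ is left as an appeal to a probabilistic heuristic plus ``ad hoc'' patches, which is not a proof; you would need to actually establish a concentration result for the diameter of random $d$-regular bipartite graphs at the relevant scale, or give an explicit family. The paper sidesteps all of this with a simple explicit construction that works uniformly in $d$: take $t$ copies of $[d/2]^t$ arranged around a cycle of length $t$, joining consecutive copies by a $K_{d/2,d/2}$-pattern on one coordinate. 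That graph is visibly $d$-regular, visibly has $U^{(0)}$ as a $t$-clique (and the edges at one layer as a $(t+1)$-clique in the line graph), and its odd-cycle and bipartiteness properties are immediate from the meta-cycle of length $t$. I would recommend revisiting the odd-$t$ case with a non-bipartite construction and aiming for a large clique rather than full completeness of $G^t$.
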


\begin{proof}

We define $G = (V,E)$ as follows.
The vertex set is $V = \cup_{i=0}^{t-1}U^{(i)}$ where each $U^{(i)}$ is a copy of $[d/2]^t$, the set of ordered $t$-tuples of symbols from $[d/2]=\{1,\dots,d/2\}$.
For all $i\in\{0,\dots,t-1\}$, we join an element $(x^{(i)}_0,\dots,x^{(i)}_{t-1})$ of $U^{(i)}$ and an element $(x^{(i+1 \bmod t)}_0,\dots,x^{(i+1 \bmod t)}_{t-1})$ of $U^{(i+1 \bmod t)}$ by an edge if the $t$-tuples agree on all symbols except possibly at coordinate $i$,
i.e.~if $x^{(i+1 \bmod t)}_j = x^{(i)}_j$ for all $j\in\{0,\dots,t-1\}\setminus\{i\}$ (and $x^{(i)}_i$, $x^{(i+1 \bmod t)}_i$ are arbitrary from $[d/2]$).

It is easy to see that each $U^{(i)}$ is a clique in $G^t$, and every set of edges incident to some $U^{(i)}$ is a clique in $(L(G))^{t+1}$. This gives $\chi_t(G) \ge |U^{(0)}| = (d/2)^t$ and $\chi'_{t+1}(G) \ge d\cdot|U^{(0)}| = 2(d/2)^{t+1}$.
(In fact here it is easy to find a colouring achieving equality in both cases.)

Since $G$ is composed only of bipartite graphs arranged in sequence around a cycle of length $t$, every odd cycle in $G$ is of length at least $t$, and $G$ is bipartite if $t$ is even.
\end{proof}

As observed in~\cite{AlMo02} and~\cite{KaKa14}, certain finite geometries yield bipartite graphs of prescribed girth giving better bounds than in Proposition~\ref{lem:construction} for a few cases.

\begin{proposition}\label{prop:geometries}
Let $d$ be one more than a prime power.
\begin{itemize}
\item There exists a bipartite, girth $6$, $d$-regular graph ${\cal P}_{d-1}$ with $\chi_2({\cal P}_{d-1}) = d^2-d+1$ and $\chi'_3({\cal P}_{d-1}) = d^3-d^2+d$. 
\item There exists a bipartite, girth $8$, $d$-regular graph ${\cal Q}_{d-1}$ with $\chi'_4({\cal Q}_{d-1}) = d^4-2d^3+2d^2$. 
\item There exists a bipartite, girth $12$, $d$-regular graph ${\cal H}_{d-1}$ with $\chi'_6({\cal H}_{d-1}) = d^6-4d^5+7d^4-6d^3+3d^2$. 
\end{itemize}
\end{proposition}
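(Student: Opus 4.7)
The three graphs in question are the classical incidence graphs (Levi graphs) of the Desarguesian projective plane $PG(2,q)$, the symplectic generalized quadrangle $W(q)$, and the split Cayley generalized hexagon over $\mathrm{GF}(q)$, respectively, where $q = d-1$ is a prime power. The plan is to identify ${\cal P}_{d-1}$, ${\cal Q}_{d-1}$, ${\cal H}_{d-1}$ with these incidence graphs and then extract the chromatic numbers by exhibiting a large clique in the appropriate power and matching it with a trivial upper bound.

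First, I would recall the standard facts: each incidence structure yields a bipartite $d$-regular graph (points versus lines, each point on $d$ lines and vice versa), and the girths of these incidence graphs are $6$, $8$, $12$ respectively, reflecting the defining axioms (no triangles of points/lines, no digons, etc.). A direct count then gives the total number of flags (point-line incidences): $d(d^2-d+1)$ points times $1$ line on the point side, balanced against the same count on the line side, yielding $d^3-d^2+d$ flags for ${\cal P}_{d-1}$, and analogously $d^4-2d^3+2d^2$ and $d^6-4d^5+7d^4-6d^3+3d^2$ flags for ${\cal Q}_{d-1}$ and ${\cal H}_{d-1}$. These totals are exactly the claimed values for the chromatic indices, so the upper bounds $\chi'_t \le |E|$ are automatic; only the matching lower bounds require work.

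For the lower bounds I would show that, in each case, the entire edge set forms a clique in $(L(G))^{t}$ for $t = 3, 4, 6$ respectively, i.e. any two flags are at $L(G)$-distance at most $t$. In the projective plane case, given two flags $(p_1, L_1)$ and $(p_2, L_2)$, let $p = L_1 \cap L_2$; the path $(p_1,L_1)\to(p,L_1)\to(p,L_2)\to(p_2,L_2)$ has length at most $3$ in $L(G)$. For the generalized quadrangle, one uses the defining projection property: for any point $p_1$ and line $L_2$ with $p_1 \notin L_2$, there is a unique point $p'$ on $L_2$ collinear with $p_1$, along a unique line $L$; this yields a flag-walk of length at most $4$. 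The generalized hexagon case is analogous, iterating the projection property once more to yield a flag-walk of length at most $6$. For ${\cal P}_{d-1}$ additionally I would handle $\chi_2$: in the incidence graph, two points are at $G$-distance $2$ (via their common line) and a point and a line are at odd distance, so $G^2$ is the disjoint union of two copies of $K_{d^2-d+1}$, giving $\chi_2({\cal P}_{d-1}) = d^2-d+1$.

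The only nontrivial step is verifying the $L(G)$-distance bounds, and here the main subtlety will be carefully enumerating the degenerate cases (shared points, shared lines, one flag's point lying on the other's line) to ensure each is covered; once the projection axioms of generalized polygons are invoked this is essentially mechanical. I expect no real obstacle, since the diameter of the incidence graph of a generalized $n$-gon is exactly $n$, and a short case analysis shows the induced $L(G)$-diameter equals $n-2$ (matching $t = 3, 4, 6$ for $n = 4, 5, 7$... wait, for $n = 3, 4, 6$ giving $t = 3, 4, 6$). This means every pair of edges is within the desired distance, finishing the argument.
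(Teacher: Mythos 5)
Your proposal takes essentially the same route as the paper: identify the three graphs with the point--line incidence graphs of $PG(2,q)$, the symplectic quadrangle $W(q)$, and the split Cayley hexagon, with $q=d-1$, and then verify regularity, girth, and the relevant (line-)graph power is a clique on the correct number of vertices. The paper's own proof is precisely this identification with all the verification left to the reader, so your write-up is a faithful expansion of the same argument.

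One point does need repair. You assert that in the projective plane case ``$G^2$ is the disjoint union of two copies of $K_{d^2-d+1}$''; this is false. A point and an incident line are at distance $1$, so every incidence edge of $G$ is also an edge of $G^2$; what is true is that $G^2$ consists of the two cliques (on points and on lines) \emph{together with} the incidence edges. This is irrelevant for the lower bound $\chi_2(\mathcal{P}_{d-1})\ge d^2-d+1$, which follows from either clique, but the matching upper bound is not automatic: one must bijectively colour points and lines with the same $n:=d^2-d+1$ colours so that no point receives the colour of an incident line. That is a perfect matching in the bipartite non-incidence graph between points and lines, which exists because that graph is $(n-d)$-regular and hence has a perfect matching by K\"onig's theorem. (Your intermediate remark that the line-graph diameter of a generalized $n$-gon equals $n-2$ is also off --- it is $n$, as the path you exhibit for $PG(2,q)$ already has length $3$ --- but you corrected yourself to $t=3,4,6$ at the end, so the conclusion stands.)
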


\begin{proof}
Letting ${\cal P}_{d-1}$ be the point-line incidence graph of the projective plane $PG(2,d-1)$,
${\cal Q}_{d-1}$ that of a symplectic quadrangle with parameters $(d-1,d-1)$, and
${\cal H}_{d-1}$ that of a split Cayley hexagon with parameters $(d-1,d-1)$,
it is straightforward to check that these graphs satisfy the promised properties.
\end{proof}

In~\cite{KaPi16}, we somehow combined Propositions~\ref{lem:construction} and~\ref{prop:geometries} for other lower bound constructions having prescribed girth. This approach is built upon generalised $n$-gons, structures which are known not to exist for $n>8$~\cite{FeHi64}.
We refer the reader to~\cite{KaPi16} for further details.

\medskip
Our second objective in this section is to introduce a different graph product applicable only to two regular balanced bipartite graphs. We use it to produce two bipartite constructions for $\chi'_t$, both of which settle the case of $t$ even left open in Proposition~\ref{lem:construction}, and the second of which treats what could be interpreted as an edge version of the degree--diameter problem.

Let $H_1 = (V_1=A_1\cup B_1,E_1)$ and $H_2 = (V_2=A_2\cup B_2,E_2)$ be two balanced bipartite graphs with given vertex orderings, i.e.~$A_1 = \{a^1_1,\dots,a^1_{n_1}\}$, $B_1 = \{b^1_1,\dots,b^1_{n_1}\}$, $A_2 = \{a^2_1,\dots,a^2_{n_2}\}$, $B_2 = \{b^2_1,\dots,b^2_{n_2}\}$ for some positive integers $n_1$, $n_2$.
We define the {\em balanced bipartite product $H_1\bowtie H_2$ of $H_1$ and $H_2$} as the graph with vertex and edge sets defined as follows:
\begin{align*}
V_{H_1\bowtie H_2} & := A_1\times A_2 \cup B_1\times B_2 \text{ and }\\
E_{H_1\bowtie H_2} & := \{(a^1_i,a^2)(b^1_i,b^2) | i\in\{1,\dots,n_1\},a^2b^2\in E_2\} \cup \\
& \qquad\qquad\{(a^1,a^2_j)(b^1,b^2_j) | a^1b^1\in E_1,j\in\{1,\dots,n_2\}\}.
\end{align*}
See Figure~\ref{fig:product} for an example of this product.

\begin{figure}
\centering
\resizebox{0.7\textwidth}{!}{
\begin{tikzpicture}[-,>=,node distance=0.8cm,scale=1,draw,nodes={circle,draw,fill=black, inner sep=1.5pt}]

\node (centering1) [fill=none,draw=none] {};
\node (centering2) [right of=centering1,fill=none,draw=none] {};

\node (b11) [above of=centering1] {};
\node (a11) [below of=centering1] {};
\node (b12) [above of=centering2] {};
\node (a12) [below of=centering2] {};

\node (b11label) [above=0.1 of b11,fill=none,draw=none] {$b^1_1$};
\node (a11label) [below=0.1 of a11,fill=none,draw=none] {$a^1_1$};
\node (b12label) [above=0.1 of b12,fill=none,draw=none] {$b^1_2$};
\node (a12label) [below=0.1 of a12,fill=none,draw=none] {$a^1_2$};

\node (bowtie) [right of=centering2,fill=none,draw=none] {\huge $\bowtie$};
\node (centering3) [right of=bowtie,fill=none,draw=none] {};
\node (centering4) [right of=centering3,fill=none,draw=none] {};

\node (b21) [above of=centering3] {};
\node (a21) [below of=centering3] {};
\node (b22) [above of=centering4] {};
\node (a22) [below of=centering4] {};

  \path (a11) edge [very thick] node[fill=none,draw=none] {} (b11);
  \path (a12) edge [very thick] node[fill=none,draw=none] {} (b11);
  \path (a11) edge [very thick] node[fill=none,draw=none] {} (b12);
  \path (a12) edge [very thick] node[fill=none,draw=none] {} (b12);

  \path (a21) edge [very thick] node[fill=none,draw=none] {} (b21);
  \path (a22) edge [very thick] node[fill=none,draw=none] {} (b21);
  \path (a21) edge [very thick] node[fill=none,draw=none] {} (b22);
  \path (a22) edge [very thick] node[fill=none,draw=none] {} (b22);

\node (b21label) [above=0.1 of b21,fill=none,draw=none] {$b^2_1$};
\node (a21label) [below=0.1 of a21,fill=none,draw=none] {$a^2_1$};
\node (b22label) [above=0.1 of b22,fill=none,draw=none] {$b^2_2$};
\node (a22label) [below=0.1 of a22,fill=none,draw=none] {$a^2_2$};

\node (equals) [right=0.8 of centering4,fill=none,draw=none] {\huge $=$};
\node (centering5) [right=0.8 of equals,fill=none,draw=none] {};
\node (centering6) [right=1.3 of centering5,fill=none,draw=none] {};
\node (centering7) [right=1.3 of centering6,fill=none,draw=none] {};
\node (centering8) [right=1.3 of centering7,fill=none,draw=none] {};

\node (bb11) [above=1.5 of centering5] {};
\node (aa11) [below=1.5 of centering5] {};
\node (bb12) [above=1.5 of centering6] {};
\node (aa12) [below=1.5 of centering6] {};
\node (bb21) [above=1.5 of centering7] {};
\node (aa21) [below=1.5 of centering7] {};
\node (bb22) [above=1.5 of centering8] {};
\node (aa22) [below=1.5 of centering8] {};

  \path (aa11) edge [very thick] node[fill=none,draw=none] {} (bb11);
  \path (aa11) edge [very thick] node[fill=none,draw=none] {} (bb12);
  \path (aa11) edge [very thick] node[fill=none,draw=none] {} (bb21);
  \path (aa12) edge [very thick] node[fill=none,draw=none] {} (bb11);
  \path (aa12) edge [very thick] node[fill=none,draw=none] {} (bb12);
  \path (aa12) edge [very thick] node[fill=none,draw=none] {} (bb22);
  \path (aa21) edge [very thick] node[fill=none,draw=none] {} (bb11);
  \path (aa21) edge [very thick] node[fill=none,draw=none] {} (bb21);
  \path (aa21) edge [very thick] node[fill=none,draw=none] {} (bb22);
  \path (aa22) edge [very thick] node[fill=none,draw=none] {} (bb12);
  \path (aa22) edge [very thick] node[fill=none,draw=none] {} (bb21);
  \path (aa22) edge [very thick] node[fill=none,draw=none] {} (bb22);

\node (bb11label) [above=-0.2 of bb11,fill=none,draw=none] {$(b^1_1,b^2_1)$};
\node (aa11label) [below=-0.2 of aa11,fill=none,draw=none] {$(a^1_1,a^2_1)$};
\node (bb12label) [above=-0.2 of bb12,fill=none,draw=none] {$(b^1_1,b^2_2)$};
\node (aa12label) [below=-0.2 of aa12,fill=none,draw=none] {$(a^1_1,a^2_2)$};
\node (bb21label) [above=-0.2 of bb21,fill=none,draw=none] {$(b^1_2,b^2_1)$};
\node (aa21label) [below=-0.2 of aa21,fill=none,draw=none] {$(a^1_2,a^2_1)$};
\node (bb22label) [above=-0.2 of bb22,fill=none,draw=none] {$(b^1_2,b^2_2)$};
\node (aa22label) [below=-0.2 of aa22,fill=none,draw=none] {$(a^1_2,a^2_2)$};
\end{tikzpicture}}
\caption{An illustration of the product $K_{2,2}\bowtie K_{2,2}$.\label{fig:product}}
\end{figure}
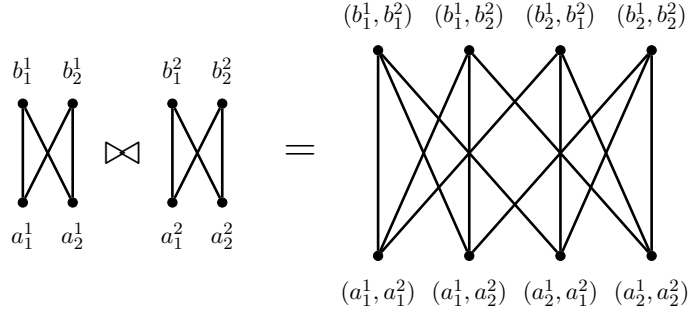

Usually the given vertex orderings will be of either of the following types.
We say that a labelling $A = \{a_1,\dots,a_n\}$, $B = \{b_1,\dots,b_n\}$ of $H=(V=A\cup B, E)$ is a {\em matching ordering} of $H$ if $a_ib_i\in E$ for all $i\in\{1,\dots,n\}$.
We say it is a {\em comatching ordering} if $a_ib_i\notin E$ for all $i\in\{1,\dots,n\}$.
Note by Hall's theorem that every non-empty regular balanced bipartite graph admits a matching ordering, while every non-complete one admits a comatching ordering.

Let us now give some properties of this product relevant to our problem, especially concerning its degree and distance properties. The first of these propositions follow easily from the definition.

\begin{proposition}\label{prop:productdegree}
Let $H_1$ and $H_2$ be two balanced bipartite graphs that have part sizes $n_1$ and $n_2$, respectively, and are regular of degrees $d_1$ and $d_2$, respectively, for some positive integers $n_1,n_2,d_1,d_2$. 
Suppose $H_1$, $H_2$ are given in either matching or comatching ordering.
Then $H_1\bowtie H_2$ is a regular balanced bipartite graph with parts $A_{H_1\bowtie H_2} = A_1\times A_2$ and $B_{H_1\bowtie H_2} = B_1\times B_2$ each of size $n_1n_2$. If both are in matching ordering, then $H_1\bowtie H_2$ has degree $d_1+d_2-1$, otherwise it has degree $d_1+d_2$.
\end{proposition}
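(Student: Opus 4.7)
The plan is a direct neighborhood count. First, observe that the bipartite structure and the part sizes are immediate from the definition of the edge set: every edge listed has one endpoint in $A_1 \times A_2$ and the other in $B_1 \times B_2$, so these two classes (each of cardinality $n_1 n_2$) form the bipartition of $H_1 \bowtie H_2$. It then suffices to compute the degree of a single vertex in each class.

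Fix an arbitrary vertex $(a^1_i, a^2_j) \in A_1 \times A_2$. The edges of $H_1 \bowtie H_2$ incident with it split into two families dictated by the two clauses of the edge-set definition: the \emph{first-type} neighbors $(b^1_i, b^2)$ with $a^2_j b^2 \in E_2$, of which there are exactly $d_2$; and the \emph{second-type} neighbors $(b^1, b^2_j)$ with $a^1_i b^1 \in E_1$, of which there are exactly $d_1$. Within a single family all listed neighbors are distinct (they differ in the coordinate that ranges over a neighborhood in $H_1$ or $H_2$), so the only way the total count can drop below $d_1 + d_2$ is through a coincidence between the two families.

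Such a coincidence can happen only at the ``diagonal'' vertex $(b^1_i, b^2_j)$: it lies in the first family iff $a^2_j b^2_j \in E_2$, which holds iff $H_2$ is given in matching ordering, and it lies in the second family iff $a^1_i b^1_i \in E_1$, which holds iff $H_1$ is given in matching ordering. Hence the two families overlap in exactly one vertex precisely when \emph{both} orderings are matching, giving degree $d_1 + d_2 - 1$; in the remaining three cases the families are disjoint and the degree is $d_1 + d_2$. A symmetric count for a vertex of $B_1 \times B_2$ gives the same value, so $H_1 \bowtie H_2$ is regular of the claimed degree. There is no substantive obstacle here: the only subtlety is pinpointing that the matching/comatching dichotomy is exactly what controls whether the diagonal neighbor is double-counted.
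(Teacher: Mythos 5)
Your proof is correct and is the natural direct verification; the paper doesn't even write it out, saying only that the proposition ``follows easily from the definition,'' so your argument is precisely the intended one. You correctly identify the two families of neighbours dictated by the two clauses of the edge-set definition, observe that each family has $d_2$ and $d_1$ distinct members respectively, and pin down that the only possible coincidence is at the diagonal vertex $(b^1_i,b^2_j)$, which occurs exactly when both $a^1_ib^1_i\in E_1$ and $a^2_jb^2_j\in E_2$, i.e.\ exactly when both orderings are matching orderings.
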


\begin{proposition}\label{prop:productdistance}
Let $H_1 = (V_1=A_1\cup B_1,E_1)$ and $H_2 = (V_2=A_2\cup B_2,E_2)$ be two regular balanced bipartite graphs.
\begin{enumerate}
\item\label{prop:productdistance,eveneven}
Suppose that for every $a^1,a'^1\in X_1\subseteq A_1$ there is a $t_1$-path between $a^1$ and $a'^1$ in $H_1$ (for some $t_1$ even).
Suppose that for every $a^2,a'^2\in X_2\subseteq A_2$ there is a $t_2$-path between $a^2$ and $a'^2$ in $H_2$ (for some $t_2$ even).
Then for every $(a^1,a^2),(a'^1,a'^2)\in X_1\times X_2 \subseteq A_{H_1\bowtie H_2}$, there is a $(t_1+t_2)$-path between $(a^1,a^2)$ and $(a'^1,a'^2)$ in $H_1\bowtie H_2$.
\item\label{prop:productdistance,evenodd}
Suppose that for every $a^1,a'^1\in X_1\subseteq A_1$ there is a $t_1$-path between $a^1$ and $a'^1$ in $H_1$ (for some $t_1$ even).
Suppose that for every $a^2\in X_2\subseteq A_2$ and $b^2\in Y_2\subseteq B_2$ there is a $t_2$-path between $a^2$ and $b^2$ in $H_2$ (for some $t_2$ odd).
Then for every $(a^1,a^2)\in X_1\times X_2\subseteq A_{H_1\bowtie H_2}$ and $(b^1,b^2)\in Y_1\times Y_2 \subseteq B_{H_1\bowtie H_2}$ where $Y_1 = \{b^1_i\given a^1_i\in X_1\}$, there is a $(t_1+t_2)$-path between $(a^1,a^2)$ and $(b^1,b^2)$ in $H_1\bowtie H_2$.
\end{enumerate}
\end{proposition}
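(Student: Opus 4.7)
The key structural observation is that every edge of $H_1\bowtie H_2$ simultaneously flips the $A$/$B$-side in both coordinates, but in one of two \emph{modes}: a type-2 edge $(a^1,a^2_j)(b^1,b^2_j)$ advances the first coordinate along a genuine $H_1$-edge $a^1b^1$ while merely toggling the second coordinate between the fixed index-paired vertices $a^2_j$ and $b^2_j$, whereas type-1 edges act symmetrically. Thus a sequence of $t_1$ consecutive type-2 edges can ``thread'' any prescribed $H_1$-walk through the first coordinate, with the innocuous side-effect that the second coordinate oscillates between the two vertices sharing some chosen index $j$; type-1 edges thread $H_2$-walks analogously.

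For part~(i), I would fix an $H_1$-path $v_0v_1\cdots v_{t_1}$ from $a^1$ to $a'^1$ and an $H_2$-path $w_0w_1\cdots w_{t_2}$ from $a^2$ to $a'^2$, write $a^2=a^2_j$ and $a'^1=a^1_m$, and build the required walk in two phases. Phase~1 consists of $t_1$ consecutive type-2 edges tracking $v_0,v_1,\ldots,v_{t_1}$ on the first coordinate: after $s$ such edges the walk sits at $(v_s,a^2_j)$ or $(v_s,b^2_j)$ according to the parity of $s$, so Phase~1 ends at $(a'^1,a^2)$ because $t_1$ is even. Phase~2 consists of $t_2$ consecutive type-1 edges tracking $w_0,w_1,\ldots,w_{t_2}$ on the second coordinate with the first coordinate toggling on $a^1_m,b^1_m$; since $t_2$ is even, Phase~2 terminates at $(a'^1,a'^2)$. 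Concatenation gives the required walk of length $t_1+t_2$.

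Part~(ii) runs the same construction with two parity-forced modifications. Writing $b^1=b^1_k$, the hypothesis $Y_1=\{b^1_i\given a^1_i\in X_1\}$ guarantees $a^1_k\in X_1$, so in Phase~1 I choose the $H_1$-path to go from $a^1$ to $a^1_k$ instead of to some generic $a'^1$. Phase~2 now comprises an odd number $t_2$ of type-1 edges; the first-coordinate toggle accordingly ends at $b^1_k=b^1$ while the second coordinate reaches $b^2$, so the concatenated walk lands at $(b^1,b^2)$ as desired.

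I do not expect a real obstacle: the verification amounts to checking that each edge in the construction actually lies in $E_{H_1\bowtie H_2}$, which is immediate because type-2 steps are supplied with $H_1$-edges by the fixed $H_1$-path (and the simultaneous index-paired toggle in the second coordinate carries no further edge condition), and symmetrically for type-1 steps. The one point that requires care is the parity bookkeeping in part~(ii), which is exactly what the alignment condition encoded in the definition of $Y_1$ is engineered to accommodate.
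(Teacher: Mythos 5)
Your proposal is correct and follows essentially the same route as the paper: traverse the first coordinate for $t_1$ steps using edges from the second family (the second coordinate merely toggles within a fixed index pair), then traverse the second coordinate for $t_2$ steps using edges from the first family, with the condition $Y_1 = \{b^1_i \given a^1_i \in X_1\}$ supplying exactly the index alignment needed at the handover so that the odd-parity toggle in part~(ii) lands on $b^1$. The paper writes out only part~(ii) and leaves part~(i) to the reader, but the two-phase concatenation you describe is precisely its construction.
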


\begin{proof}
We only show part~\ref{prop:productdistance,evenodd}, as the other part is established in the same manner.
Let $(a^1,a^2) \in X_1\times X_2$ and $(b^1,b^2)\in Y_1\times Y_2$.
Using the distance assumption on $H_1$, let $a^1_{i_0},b^1_{i_1},a^1_{i_2},\cdots, b^1_{i_{t_1-1}},a^1_{i_{t_1}}$ be a $t_1$-path in $H_1$ between $a^1=a^1_{i_0}$ and $a^1_{i_{t_1}}$, where $i_{t_1}$ is such that $b^1 = b^1_{i_{t_1}}$.
Using the distance assumption on $H_2$, let $a^2_{j_0}b^2_{j_1}a^2_{j_2}\cdots a^2_{j_{t_2-1}}b^2_{j_{t_2}}$ be a $t_2$-path in $H_2$ between $a^2=a^2_{j_0}$ and $b^2=b^2_{j_{t_2}}$.
The following $(t_1+t_2)$-path between $(a^1,a^2)$ and $(b^1,b^2)$ in $H_1\bowtie H_2$ traverses using one of the coordinates, then the other:
\begin{align*}
(a^1,a^2)=
(a^1_{i_0},a^2_{j_0})(b^1_{i_1},b^2_{j_0})(a^1_{i_2},a^2_{j_0})\cdots (b^1_{i_{t_1-1}},b^2_{j_0})(a^1_{i_{t_1}},a^2_{j_0})&\\
(b^1_{i_{t_1}},b^2_{j_1})(a^1_{i_{t_1}},a^2_{j_2})\cdots (a^1_{i_{t_1}},b^2_{j_{t_2-1}})(b^1_{i_{t_1}},b^2_{j_{t_2}})&
=(b^1,b^2).\qedhere
\end{align*}
\end{proof}

We use this product to show that no version of Theorem~\ref{thm:odd} may hold for $\chi'_t$.
In combination with the trivial bound $\chi'_t(G)=O(d^t)$ if $\Delta(G)\le d$, we deduce Proposition~\ref{prop:constructions}\ref{prop:constructions,odd,index} from Proposition~\ref{lem:construction}, the following result and the fact that $\chi'_2(K_{d,d})=d^2$.

\begin{proposition}
Fix $t \ge 4$ even. For every $d\ge 2$ with  $d \equiv 0 \pmod{2(t-2)}$, there exists a $d$-regular bipartite graph $G$ with $\chi'_t(G) \ge d^t/(et2^{t-1})$.
\end{proposition}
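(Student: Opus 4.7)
The plan is to realize $G$ as the balanced bipartite product $H_1 \bowtie K_{d_2,d_2}$, where $H_1$ is the bipartite $d_1$-regular graph of Proposition~\ref{lem:construction} applied with parameter $t-2$ (which is even and at least $2$; the construction extends transparently to the case $t-2=2$ when Proposition~\ref{lem:construction}'s literal hypothesis $t\ge 3$ is violated). Writing $d=2(t-2)m$, I would set $d_1=2(t-3)m$ and $d_2=2m+1$, so that $d_1$ is even and $d_1+d_2-1=d$; Proposition~\ref{prop:productdegree} then guarantees $G$ is bipartite and $d$-regular. On $H_1$, I would use the natural matching ordering that pairs each $a^1_i\in U^{(i')}$ with its twin $b^1_i\in U^{(i'+1)}$ sharing the same $(t-2)$-tuple, which is an edge by construction; this choice forces $\{b^1_i : a^1_i\in U^{(0)}\}=U^{(1)}$.

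Set $X=U^{(0)}\times A_2\subseteq A_G$ and $Y=U^{(1)}\times B_2\subseteq B_G$, and apply Proposition~\ref{prop:productdistance}\ref{prop:productdistance,evenodd} with $t_1=t-2$ (even) and $t_2=1$ (odd). Since $U^{(0)}$ is a $(t-2)$-walk-clique in $H_1$ and any $a^2\in A_2$ is joined to any $b^2\in B_2$ by an edge (a $1$-path) of $K_{d_2,d_2}$, every $u\in X$ and $v\in Y$ are connected by a $(t-1)$-path in $G$. The crux is this saving of one step: without the interlocking of $H_1$- and $H_2$-edges inherent in the bipartite product, one would only obtain a $t$-path and hence $L(G)$-distance $t+1$.

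Take $S=E_G(X,Y)$. For any two distinct edges $e=uv$ and $e'=u'v'$ of $S$ with $u,u'\in X$ and $v,v'\in Y$, we have $d_G(u,v')\le t-1$, so $d_{L(G)}(e,e')\le t$, and $S$ is a clique in $(L(G))^t$. Counting, each $u=(a^1_i,a^2_j)\in X$ sends to $Y$ exactly $d_1/2$ type-$H_1$ edges (the "free-coordinate-$0$" neighbours of $a^1_i$ that land in $U^{(1)}$) and all $d_2$ type-$H_2$ edges (since $b^1_i\in U^{(1)}$ and $K_{d_2,d_2}$ is complete), with one coincidence at the matching edge, so $|S|=(m(t-3))^{t-2}(2m+1)\cdot m(t-1)\ge 2m^t(t-3)^{t-2}(t-1)$.

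The main obstacle remaining is verifying $|S|\ge d^t/(et\cdot 2^{t-1})$, i.e., after substituting $d=2(t-2)m$, the inequality $et(t-1)(t-3)^{t-2}\ge(t-2)^t$. I would combine the bound $(1-1/(t-2))^{t-2}\ge (t-3)/((t-2)e)$ (a consequence of the standard fact that $(1-1/n)^{n-1}$ decreases to $1/e$), which gives $e(t-3)^{t-2}\ge(t-3)(t-2)^{t-3}$, with the polynomial estimate $t(t-1)(t-3)\ge(t-2)^3$ for $t\ge 4$ (equivalent to $2t^2-9t+8\ge 0$). Multiplying these two inequalities yields the desired bound and completes the proof.
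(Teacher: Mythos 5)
Your proof is correct and takes essentially the same route as the paper's: form $G$ as the balanced bipartite product of the Proposition~\ref{lem:construction} construction for parameter $t-2$ with a complete bipartite graph, apply Proposition~\ref{prop:productdistance}\ref{prop:productdistance,evenodd} to obtain $(t-1)$-paths from $X=U^{(0)}\times A_2$ to $Y=U^{(1)}\times B_2$, and count the edges spanning $X$ and $Y$. The only cosmetic divergence is in the choice of vertex orderings --- the paper places $G_1$ in comatching ordering and takes $d_2=d/(t-2)$ so that $G$ has degree $d_1+d_2$ with no overlap, whereas you take matching orderings on both factors and $d_2=d/(t-2)+1$ to absorb the $-1$ from Proposition~\ref{prop:productdegree} --- but this yields the same per-vertex edge count $(t-1)m$ towards $Y$ and reduces to the same final polynomial inequality $et(t-1)(t-3)^{t-2}\ge(t-2)^t$.
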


\begin{proof}
Let $t_1 = t-2$ and $d_1 = (t_1-1)d/t_1$. Let $G_1=(V_1,E_1)$ be the construction promised by Proposition~\ref{lem:construction} for $d_1$ and $t_1$. Since $t_1$ is even, we can write $V_1 = A_1\cup A_2$ where $A_1 = \cup \{U^{(i)} \given i\in\{0,\dots,t-1\}\text{ even}\}$ and $B_1 = \cup \{U^{(i)} \given i\in\{0,\dots,t-1\}\text{ odd}\}$.
This is a $d_1$-regular balanced bipartite graph, and for every $a_1,a'_1 \in U^{(0)}\subseteq A_1$ there exists a $t_1$-path between $a_1$ and $a'_1$.
Moreover, it is possible to label $A_1$ and $B_1$ in comatching ordering so that the indices for $U^{(i)}$ coincide with those for $U^{(i+1)}$ for every $i\in\{0,2,\dots,t-2\}$.

Let $t_2 = 1$ and $d_2=d-d_1=d/t_1$.
Let $G_2 = (V_2 =A_2\cup B_2,E_2) = K_{d_2,d_2}$. This is a $d_2$-regular balanced bipartite graph, and for every $a_2\in A_2,b_2\in B_2$, there exists a $t_2$-path between $a_2$ and $b_2$. Trivially any labelling of $A_2$ and $B_2$ gives rise to a matching ordering.

Let $G = G_1\bowtie G_2$, $X = U^{(0)}\times A_2$ and $Y = U^{(1)}\times B_2$.
Now $G$ is a $d$-regular bipartite graph by Proposition~\ref{prop:productdegree}, and by Proposition~\ref{prop:productdistance} for every $(a_1,a_2)\in X$ and $(b_1,b_2)\in Y$, there exists a $(t-1)$-path between $(a_1,a_2)$ and $(b_1,b_2)$. Thus the edges of $G$ that span $X\times Y$ induce a clique in $(L(G))^t$. The number of such edges is (since $t>3$) at least
\[
\left(\frac{d_1}{2}\right)^{t_1}d_2\left(\frac{d_1}{2} + d_2\right) = \left(1-\frac{1}{t-2}\right)^{t-2}\frac{(t-1)d^t}{(t-3)^22^{t-1}} \ge \frac{d^t}{et2^{t-1}}.
\qedhere
\]
\end{proof}

Alternatively, Proposition~\ref{prop:constructions}\ref{prop:constructions,odd,index} follows from the following result, albeit at the expense of a worse dependency on $t$ in the multiplicative factor. For $t\ge 2$, we can take a $(t-1)$-th power of the product operation to produce a bipartite graph $G$ of maximum degree $d$ with $\Omega(d^t)$ edges such that $(L(G))^t$ is a clique.

\begin{proposition}
\label{prop:edge-diameter}
Fix $t\geq 2$. For every $d\ge 2$ with $d \equiv 1 \pmod{t-1}$, there exists a $d$-regular bipartite graph $G=(V,E)$ with $|E| =d\cdot ((d-1)/(t-1)+1)^{t-1}$ and $\chi'_t(G) = |E|$. 
\end{proposition}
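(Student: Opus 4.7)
The plan is to set $d' := (d-1)/(t-1) + 1$ and define $G$ as the $(t-1)$-fold iterated balanced bipartite product of $K_{d',d'}$ with itself, each factor in matching ordering, and each intermediate graph in the natural matching ordering pairing $(A,x) \leftrightarrow (B,x)$ (valid since the diagonal edges $(A,x)(B,x)$ are always present in such a product). Applying Proposition~\ref{prop:productdegree} inductively, $G$ is bipartite, each part of size $(d')^{t-1}$, with degree $(t-1)(d' - 1) + 1 = d$; hence $|E| = d \cdot (d')^{t-1}$, matching the claimed formula.

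Next I would establish an explicit description of $G$: $V(G) = \{A,B\} \times [d']^{t-1}$, and $(A,x)(B,y) \in E(G)$ iff $x, y \in [d']^{t-1}$ agree in all but at most one coordinate (equivalently, Hamming distance $h(x,y) \le 1$). This follows by induction on the number of factors, directly from the definition of $\bowtie$. From this description, a walk of length $s$ in $G$ alternates sides and can reach $(\tau, y)$ from $(\sigma, x)$ iff $s \equiv [\sigma \ne \tau] \pmod 2$ and $s \ge h(x,y)$; therefore
\[
d_G\bigl((\sigma, x), (\tau, y)\bigr) = h(x, y) + \delta,
\]
where $\delta \in \{0,1\}$ encodes the parity mismatch between $h(x,y)$ and $[\sigma \ne \tau]$.

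To conclude that $\chi'_t(G) = |E|$, it suffices to verify that $(L(G))^t$ is a clique, i.e., that any two distinct edges $e, f$ admit endpoints $u \in e$, $v \in f$ with $d_G(u, v) \le t - 1$. Writing $e = (A,x)(B,y)$ and $f = (A,x')(B,y')$ (so $h(x,y), h(x',y') \le 1$): when $t$ is odd, I take the same-side pair $u = (A,x), v = (A,x')$; the required walk parity is even and $h(x,x') \le t-1$, with $h(x,x') \le t-2$ in the odd-$h$ subcase (since $t-1$ is even), so either way $d_G(u,v) \le t-1$. When $t$ is even the symmetric argument applies to the opposite-side pair $u = (A,x), v = (B,y')$ (odd walk, $t-1$ odd). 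I expect the main hurdle to be the inductive verification of the edge structure arising from iterated $\bowtie$ products and the careful parity bookkeeping in the final case analysis; the remaining steps are routine.
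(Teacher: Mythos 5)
Your construction is identical to the paper's --- $G=\bowtie^{t-1}K_{d',d'}$ with $d'=(d-1)/(t-1)+1$ in matching ordering, degree via Proposition~\ref{prop:productdegree} --- so the approach is essentially the same. Where you diverge is in verifying the distance property: the paper appeals abstractly to Proposition~\ref{prop:productdistance}, whereas you first prove an explicit description of $G$ as the ``Hamming-distance-at-most-one'' bipartite graph on $\{A,B\}\times[d']^{t-1}$ and then read off $d_G((\sigma,x),(\tau,y))=h(x,y)+\delta$ with $\delta$ the parity correction. Your route buys transparency: the parity bookkeeping (why same-side endpoints suffice when $t$ is odd, opposite-side when $t$ is even, and why the ``wrong parity'' subcase still lands within budget $t-1$) is made completely explicit, and the intermediate matching ordering needed for the iteration, which the paper glosses over, is justified by observing that $(A,x)(B,x)$ is always an edge. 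The paper's route is shorter because it reuses the already-proved product lemma, but that lemma is stated only for fixed parities of $t_1,t_2$ and requires a little care to iterate; your direct verification sidesteps that. Both are correct, and your argument is a legitimate (slightly more self-contained) instantiation of the same idea.
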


\begin{proof}
Let $d' = (d-1)/(t-1)+1$ and $G = \bowtie^{t-1} K_{d',d'}$, the $(t-1)$-th power of $K_{d',d'}$ under the product $\bowtie$, where the factors are always taken in matching ordering. By Proposition~\ref{prop:productdegree}, $G$ is a $d$-regular bipartite graph and has $d\cdot d'^{t-1}$ edges. By Proposition~\ref{prop:productdistance}, there is a path of length at most $t-1$ between every pair of vertices in the same part if $t-1$ is even, or in different parts if $t-1$ is odd. It follows that $(L(G))^t$ is a clique.
\end{proof}

\section{Proofs of Theorems~\ref{thm:even} and~\ref{thm:odd}}\label{sec:proofs}

In this section we prove the main theorems. Before proceeding, let us set notation and make some preliminary remarks.

Let $G=(V,E)$ be a graph.
We will often need to specify the vertices at some fixed distance from a vertex or an edge of $G$.
Let $i$ be a non-negative integer.
If $x\in V$, we write $A_i = A_i(x)$ for the set of vertices at distance exactly $i$ from $x$.
If $e\in E$, we write $A_i = A_i(e)$ for the set of vertices at distance exactly $i$ from an endpoint of $e$.
We shall often abuse this notation by writing $A_{\le j}$ for $\cup_{i\le j}A_i$ and so forth.
We will write $G_i = G[A_i,A_{i+1}]$ to be the bipartite subgraph induced by the sets $A_i$ and $A_{i+1}$

We will also often need to specify a unique breadth-first search subgraph $\BFS=\BFS(x)$ ($\BFS=\BFS(e)$, respectively) rooted at $x$ ($e$, respectively). Having fixed an ordering of $V$ beforehand, i.e.~writing $V=\{1,\dots,|V|\}$, $\BFS$ is a graph on $V$ whose edges are defined as follows. For every $v\in A_i$, $i > 0$, we include the edge to that neighbour of $v$ in $A_{i-1}$ being least in the ordering.

In proving the distance-$t$ chromatic number upper bounds in Theorems~\ref{thm:even} and~\ref{thm:odd} using Lemma~\ref{lem:AKS}, given $x\in V$, we need to consider the number of pairs of distinct vertices in $A_{\le t}$ that are connected by a path of length at most $t$. It will suffice to prove that this number is $O(d^{2t-\eps})$ as $d\to\infty$ for some fixed $\eps>0$.
In fact, in our enumeration we may restrict our attention to paths of length {\em exactly} $t$ 
whose endpoints are in $A_t$. This is because $|A_i| \le d^i$ for all $i$ and the number of paths of length exactly $j$ containing some fixed vertex is at most $(j+1)d^j$ for all $j$.

Similarly, in proving the distance-$t$ chromatic index upper bound in Theorem~\ref{thm:even} using Lemma~\ref{lem:AKS}, given $e\in E$, we need to consider the number of pairs of distinct edges that each have at least one endpoint in $A_{<t}$ and that are connected by a path of length at most $t-1$. It will suffice to prove that this number is $O(d^{2t-\eps})$ as $d\to\infty$ for some fixed $\eps>0$.
Similarly as above, in our enumeration we may restrict our attention to paths of length {\em exactly} $t-1$
whose endpoint edges both intersect $A_{t-1}$.

As mentioned in the introduction, for Theorem~\ref{thm:even} we will show a technical refinement of the classic bound on $\ex(n,C_{2k})$ of Bondy and Simonovits~\cite{BoSi74}. We borrow heavily from the strategy used by Pikhurko~\cite{Pik12} to obtain the following improvement on the bound of Bondy and Simonovits: for all $k\ge 2$ and $n\ge 1$,
\begin{align}\label{eqn:pikhurko}
\ex(n, C_{2k}) \le (k-1)n^{1+1/k} + 16(k-1)n.
\end{align}
This has since been improved by Bukh and Jiang~\cite{BuJi17}.
We will require a bound like~\eqref{eqn:pikhurko} that only counts edges of a certain type in bipartite $C_{2k}$-free graphs, but depends on the cardinality of only one of the parts.
Given a bipartite graph $H=(V=A\cup B,E)$, we call an edge {\em $\delta$-bunched with respect to $A$} if it is incident to a vertex in $B$ of degree at least $\delta$.

\begin{lemma}\label{lem:turanbipartite}
Fix $k\ge 2$. For any $n_A\ge 1$, if $H=(V=A\cup B,E)$ is a bipartite $C_{2k}$-free graph with $|A|=n_A$, then the number of edges that are $\delta$-bunched with respect to $A$ is at most $\delta n_A$, where $\delta = 2(k-1)n_A^{1/k} + 16(k-1)$.
\end{lemma}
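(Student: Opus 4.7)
My plan is to adapt Pikhurko's proof of~\eqref{eqn:pikhurko} to the bipartite setting, with the passage to a subgraph chosen so that only $n_A$ (and not the total order of the host graph) appears in the final bound. First I would set $B' := \{b \in B : \deg_H(b) \ge \delta\}$ and $H' := H[A \cup B']$. The $\delta$-bunched edges of $H$ are exactly the edges of $H'$, and $H'$ inherits $C_{2k}$-freeness with every $b \in B'$ retaining degree at least $\delta$ in $H'$. So the task reduces to showing $|E(H')| \le \delta n_A$.

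Assume for contradiction that $|E(H')| > \delta n_A$. Following Pikhurko, I would iteratively delete any $a \in A$ whose current degree falls below $\tau := (k-1) n_A^{1/k} + 8(k-1)$ (note $\delta = 2\tau$), together with any $b \in B'$ whose degree drops below $\tau$ as a consequence. A bookkeeping argument shows this cleaning cannot exhaust $H'$: the $A$-deletions together destroy fewer than $\tau n_A = \delta n_A / 2$ edges, and each $b$ removed was originally contributing at least $\delta$ edges, of which we credit $\tau$ to $b$'s removal and $\tau$ to the fatal $A$-deletion. What survives is a non-empty $C_{2k}$-free bipartite subgraph $H''$, with parts $A'' \subseteq A$ and $B'' \subseteq B'$, in which every vertex has degree at least $\tau$.

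On $H''$ I would then run the Bondy--Simonovits BFS, rooting at $v \in A''$ if $k$ is even and at $v \in B''$ if $k$ is odd, so that the level $L_k$ lands in $A''$. The standard path-counting lemma underlying Pikhurko's argument furnishes the growth $|L_{i+1}| \ge \tfrac{\tau - (k-1)}{k-1} |L_i|$ for $1 \le i \le k-1$ in the absence of a $C_{2k}$, together with $|L_1| \ge \tau$. Iterating yields $|L_k| \ge \tau^k / (k-1)^{k-1} \ge (k-1) n_A$, whereas $L_k \subseteq A''$ forces $|L_k| \le n_A$; contradiction. The additive term $16(k-1)$ in $\delta$ supplies exactly the slack needed to absorb the degenerate regime where $n_A$ is small, paralleling the role of $+16(k-1)n$ in~\eqref{eqn:pikhurko}.

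The main obstacle is keeping the bound pure in $n_A$: a direct application of~\eqref{eqn:pikhurko} to $H'$ pays in terms of $|A|+|B'|$, and since $|B'|$ could be as large as $|E(H')|/\delta$, this is too weak by a polynomial factor. The parity-based choice of the root is precisely what forces the critical level $L_k$ into $A''$, where the trivial $n_A$ bound applies. A secondary delicate point is that cleaning on the $A$-side may drag $B'$-vertices below threshold; choosing $\tau = \delta/2$ gives exactly the right headroom, since each $b \in B'$ started with degree $\delta = 2\tau$ and can afford to lose half its incidences before it must be removed.
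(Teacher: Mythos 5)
Your overall architecture matches the paper's: reduce to the bipartite subgraph on $\delta$-bunched edges, extract a subgraph of minimum degree roughly $\delta/2$, run a Bondy--Simonovits BFS from a root in the part chosen so that level $k$ lies in the (small) $A$-side, and derive a contradiction from geometric level growth. Your explicit iterative cleaning with $\tau = \delta/2$ is a valid replacement for the paper's ``average degree $>\delta$ implies a subgraph of min degree $>\delta/2$'' step, and your bookkeeping goes through (the $<\tau n_A$ bound on edges lost to $A$-deletions also caps the number of removable $B'$-vertices by $n_A$, since each must have already lost at least $\tau$ incidences). Incidentally, your parity choice for the root (in $A''$ when $k$ is even, in $B''$ when $k$ is odd) is the one that actually lands $L_k$ in $A''$, and the paper's stated rule appears to be reversed.

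The gap is in the sentence beginning ``The standard path-counting lemma underlying Pikhurko's argument furnishes the growth $|L_{i+1}| \ge \frac{\tau - (k-1)}{k-1}|L_i|$.'' There is no single standard lemma that hands you this inequality; it is the output of the substantive part of Pikhurko's proof, and it is exactly where the paper does its work. Concretely, one needs (a) Pikhurko's lemma that any bipartite graph of minimum degree $\ge k$ contains a $\Theta$-subgraph (a chorded cycle of length $\ge 2k$), (b) his claim that for $i < k$ the layer graph $H_i = H[L_i, L_{i+1}]$ contains no $\Theta$-subgraph, hence has average degree $\le 2k-2$, and (c) an induction propagating that the back-degree from $L_{i+1}$ to $L_i$ is at most $k-1+\eps$ (not $k-1$ exactly; the paper's $\eps = 3(k-1)^2/\delta'$ is needed to make the induction close). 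Your stated rate drops the $\eps$-correction, and your iterated bound $\tau^k/(k-1)^{k-1}$ is also not what your own displayed rate would give after multiplying out $|L_1| \cdot \bigl(\tfrac{\tau-(k-1)}{k-1}\bigr)^{k-1}$. Neither slip is fatal --- the $+8(k-1)$ slack in $\tau$ comfortably absorbs both, just as $+16(k-1)n$ does in Pikhurko's theorem --- but as written the core of the argument is asserted rather than proved, and the reader cannot tell whether you know the $\Theta$-subgraph mechanism or are merely pattern-matching the shape of the conclusion. To make the proof self-contained you should either reproduce steps (a)--(c) or state a precise citation to a lemma that literally yields a layer-growth inequality of the required form.
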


\begin{proof}
For a contradiction, let us assume for some $n_A\ge 1$ that there exists a $C_{2k}$-free bipartite graph $H=(V=A\cup B,E)$ with $|A|=n_A$ such that the number of $\delta$-bunched edges with respect to $A$ is more than $\delta n_A$. Let $H'=(V'=A'\cup B',E')$ be the bipartite subgraph of $H$ induced by those $\delta$-bunched edges so that $A'\subseteq A$ and $B'\subseteq B$ have smallest cardinality.

Every vertex in $B'$ has degree at least $\delta$, and the vertices in $A'$ have degree at least $|E'|/|A'| > \delta$ by assumption. So the average degree $d(H')$ of $H'$ is more than $\delta$. This implies that $H'$ contains a subgraph $H''=(V''=A''\cup B'',E'')$ of minimum degree at least $\delta'$ where $\delta' = d(H')/2>\delta/2 = (k-1)n_A^{1/k} + 8(k-1)$, and so that $A''\subseteq A'$ and $B''\subseteq B'$.

Let $x\in A''$ if $k$ is odd and $x\in B''$ otherwise.
For every $i\ge 0$, we define $V_i$ to be the set of vertices at distance $i$ from $x$ in $H''$, and $H_i = H[V_i,V_{i+1}]$ to be the bipartite subgraph of $H''$ induced by the sets $V_i$ and $V_{i+1}$. We use two intermediary results from~\cite{Pik12} concerning the presence of a {\em $\Theta$-subgraph}, defined to be any subgraph that is a cycle of length at least $2k$ with a chord:

\begin{lemma}[\cite{Pik12}]
Let $k\ge 3$. Any bipartite graph of minimum degree at least $k$ contains a $\Theta$-subgraph.
\end{lemma}

\begin{claim}[\cite{Pik12}]
For $i\in\{0,\dots, k-1\}$, $H_i$ contains no $\Theta$-subgraph. 
\end{claim}

These two statements, together with the fact that there is always a subgraph whose minimum degree is at least half the average degree of the (super)graph, imply that for $k\ge 3$ and $i\in\{0,\dots,k-1\}$, the average degree of $H_i$ must satisfy $d(H_i) \le 2k-2$.
Note that since $H''$ is bipartite there are no edges of $H''$ spanning $V_i$ for any $i\in\{1,\dots,k-1\}$.

We now show by induction for every $i\in\{0,\dots,k-1\}$ that the average degree from $V_{i+1}$ to $V_i$ is at most $k-1+\eps$ where $\eps = 3(k-1)^2/\delta'$, i.e.~that the number $e(H_i)$ of edges in $H_i$ satisfies
\begin{align}\label{eqn:upper}
e(H_i) \le (k-1+\eps)|V_{i+1}|.
\end{align}
The base case $i=0$ is clearly true since each vertex of $V_1$ has exactly one edge to $V_0$.
Now for the induction let $i\in\{1,\dots,k-1\}$ and assume that the statement is true for $i-1$.
By the inductive hypothesis and the properties of $H''$, we have
\begin{align}\label{eqn:lower}
e(H_i) = \sum_{y\in V_i}\deg_{V_{i+1}}(y) \ge (\delta'-(k-1+\eps))|V_i|.
\end{align}
This shows that the average degree from $V_i$ to $V_{i+1}$ is at least $\delta'-(k-1+\eps)\ge 2k-2$ (where we used that $\delta'\ge8(k-1)$ and $\eps \le 3(k-1)/8$). In particular, $|V_{i+1}|>0$.
It cannot be that the average degree from $V_{i+1}$ to $V_i$ is greater than $2k-2$, or else we would not have $d(H_i) \le 2k-2$. So $e(H_i) \le (2k-2)|V_{i+1}|$. Combining this with~\eqref{eqn:lower}, we obtain
\begin{align*}
|V_i| \le \frac{2k-2}{\delta'-(k-1+\eps)}|V_{i+1}|.
\end{align*}
Again using the density condition on $H_i$, this implies
\begin{align*}
\frac{2e(H_i)}{(1+\frac{2k-2}{\delta'-(k-1+\eps)})|V_{i+1}|}\le\frac{2e(H_i)}{|V_{i+1}|+|V_i|}=d(H_i) \le 2k-2
\end{align*}
which in turn implies (using that $\delta'\ge8(k-1)$ and $\eps \le 3(k-1)/8$) that
\begin{align*}
e(H_i) \le \left(k-1+\frac{2(k-1)^2}{\delta'-(k-1+\eps)}\right)|V_{i+1}| \le (k-1+\eps)|V_{i+1}|,
\end{align*}
as required.

Combining~\eqref{eqn:upper} and~\eqref{eqn:lower}, we have that for all $i\in\{0,\dots,k-1\}$
\begin{align*}
\frac{|V_{i+1}|}{|V_i|} \ge \frac{\delta'-(k-1+\eps)}{k-1+\eps} \ge \frac{\delta'}{k-1+2\eps},
\end{align*}
where the last inequality again uses that $\delta'\ge8(k-1)$ and $\eps \le 3(k-1)/8$ together with the definition of $\eps$. Since $\delta'>(k-1)n_A^{1/k} + 8(k-1) \ge (k-1)n_A^{1/k}$, we have by the choice of $x$ in $A''$ or $B''$ that
\begin{align*}
n_A \ge |A''| \ge |V_k| \ge \left(\frac{\delta'}{k-1+2\eps}\right)^k > \left(\frac{(k-1)n_A^{1/k} + 8(k-1)}{k-1+6(k-1)n_A^{-1/k}}\right)^k > n_A,
\end{align*}
a contradiction. This completes the proof.
\end{proof}

\begin{proof}[Proof of Theorem~\ref{thm:even}\ref{thm:even,vertex}]
By the probabilistic construction described in~\cite{AlMo02}, it suffices to prove only the upper bound in the statement. We may also assume that $t\geq 2$, since it was already observed in~\cite{KaPi16} that for any $\ell \geq 3$ the chromatic number of any $C_\ell$-free graph of maximum degree $d$ is $O(d/\log d)$. 

Let $\ell \ge 2t+2$ be even, let $G=(V,E)$ be a graph of maximum degree at most $d$ such that $G$ contains no $C_\ell$ as a subgraph, and let $x\in V$.
Let $T$ denote the number of pairs of distinct vertices in $A_t$ that are connected by a path of length exactly $t$.
Let $\eps = (\ell-2t)/\ell \in (0,1)$.
As discussed at the beginning of the section, it suffices for the proof to show that $T\le Cd^{2t-\eps}$ where $C$ is a constant independent of $d$, by Lemma~\ref{lem:AKS}.

Let us count the possibilities for a path $x_0\dots x_t$ of length $t$ between two distinct vertices $x_0,x_t\in A_t$. 
Setting $\delta=2(\ell/2-1)d^{2t/\ell} + 16(\ell/2-1)$, we consider three cases.
\begin{enumerate}
\item
The penultimate vertex in the path satisfies $x_{t-1}\in A_{t-1}\cup A_t$. By the assumption that $t\geq 2$, we obtain that $|A_{t-1}\cup A_t| \leq d^t$. By~\eqref{eqn:pikhurko}, the number of edges in $G[A_{t-1}\cup A_t]$ is at most $(\ell/2-1)d^{t(1+2/\ell)}+16(\ell/2-1)d^t$. So the number of choices for $x_{t-1}x_t$ is at most $8.5\ell d^{t+1-\eps}$.
The number of choices for the rest of the path is at most $d^{t-1}$.

\item The penultimate vertex satisfies $x_{t-1}\in A_{t+1}$ and $x_{t-1}x_t$ is $\delta$-bunched in $G_t$ with respect to $A_t$. Then Lemma~\ref{lem:turanbipartite} ensures that there are at most $\delta d^t = 2(\ell/2-1)d^{t(1+2/\ell)} + 16(\ell/2-1)d^t \le 9\ell d^{t+1-\eps}$ choices for $x_{t-1}x_t$.
The number of choices for the rest of the path is at most $d^{t-1}$.

\item The penultimate vertex satisfies $x_{t-1}\in A_{t+1}$ and $x_{t-1}x_t$ is not $\delta$-bunched in $G_t$ with respect to $A_t$. By the definition of $\delta$-bunched, there are fewer than $\delta \le 9\ell d^{1-\eps}$ choices for $x_t$ given $x_{t-1}$, and so at most $9\ell d^{t-\eps}$ choices given $x_0$.
There are at most $d^t$ choices for $x_0$.
\end{enumerate}
Summing over the above cases, the overall number of choices for the path $x_0\dots x_t$ is at most $26.5\ell d^{2t-\eps}$, 
giving the required bound on $T$.
\end{proof}

\begin{proof}[Proof of Theorem~\ref{thm:even}\ref{thm:even,edge}]
By the probabilistic construction described in~\cite{KaKa14}, it suffices to prove only the upper bound in the statement. To that end, let $\ell \ge 2t$ be even, let $G=(V,E)$ be a graph of maximum degree at most $d$ such that $G$ contains no $C_\ell$ as a subgraph, and let $e\in E$.
Let $T$ denote the number of pairs of distinct edges in $G[A_{t-1}]$ or $G_{t-1}$ that are connected by a path of length $t-1$.
Let $\eps = (\ell-2t+2)/\ell \in (0,1)$.
As discussed at the beginning of the section, it suffices to show that $T\le Cd^{2t-\eps}$ where $C$ is a constant independent of $d$, by Lemma~\ref{lem:AKS}.

Let us count the possibilities for a path $x_0\dots x_{t+1}$ where $x_1\dots x_t$ is a path of length $t-1$ between two distinct edges $x_0x_1$ and $x_tx_{t+1}$ of $G[A_{t-1}]$ or $G_{t-1}$. 
Setting
$\delta=2(\ell/2-1)(2d^{t-1})^{2/\ell} + 16(\ell/2-1)$,
 we consider five cases.
\begin{enumerate}
\item The last edge $x_tx_{t+1}$ is $\delta$-bunched in $G_{t-1}$ with respect to $A_{t-1}$. Then using Lemma~\ref{lem:turanbipartite} there are at most $2\delta d^{t-1} = 2^{2+2/\ell}(\ell/2-1)d^{(t-1)(1+2/\ell)} + 32(\ell/2-1)d^{t-1} \le (16+2^{1+2/\ell})\ell d^{t-\eps}$ choices for $x_tx_{t+1}$.
The number of choices for the rest of the path is at most $d^t$.

\item The penultimate vertex satisfies $x_t\in A_t$ and $x_tx_{t+1}$ is not $\delta$-bunched in $G_{t-1}$ with respect to $A_{t-1}$. By the definition of $\delta$-bunched, there are fewer than $\delta \le (8+2^{2/\ell})\ell d^{1-\eps}$ choices for $x_{t+1}$ given $x_t$.
The number of choices for $x_0x_1$ is at most $4d^t$ (where the extra factor $2$ accounts for whether $x_0$ or $x_1$ is in $A_t$) and then given $x_1$ there are at most $d^{t-1}$ choices for $x_1\dots x_t$. So there are at most $4(8+2^{2/\ell})\ell d^{2t-\eps}$ choices for this case.

\item The penultimate vertex satisfies $x_t\in A_{t-1}$ and the penultimate edge $x_{t-1}x_t$ is contained in $ G[A_{t-2}\cup A_{t-1}]$. By (\ref{eqn:pikhurko}) together with the fact that  $|A_{t-2}\cup A_{t-1}|\leq 2d^{t-1}$, the number of edges in $G[A_{t-2}\cup A_{t-1}]$ is at most $2^{1+2/\ell}(\ell/2-1)d^{(t-1)(1+2/\ell)} + 32(\ell/2-1)d^{t-1}$. So the number of choices for $x_{t-1}x_t$ is at most $(16+2^{2/\ell})\ell d^{t-\eps}$.
The number of choices for the rest of the path is at most $d^t$.

\item The penultimate vertex satisfies $x_t\in A_{t-1}$ and the penultimate edge $x_{t-1}x_t$ is $\delta$-bunched in $G_{t-1}$ with respect to $A_{t-1}$. Again by Lemma~\ref{lem:turanbipartite} there are at most $2\delta d^{t-1} \le (16+2^{1+2/\ell})\ell d^{t-\eps}$ choices for $x_{t-1}x_t$.
The number of choices for the rest of the path is at most $d^t$.

\item The penultimate vertex satisfies $x_t\in A_{t-1}$ and $x_{t-1}x_t$ is not $\delta$-bunched in $G_{t-1}$ with respect to $A_{t-1}$. By the definition of $\delta$-bunched, there are fewer than $\delta \le (8+2^{2/\ell})\ell d^{1-\eps}$ choices for $x_{t-1}$ given $x_t$.
The number of choices for $x_{t+1}x_t$ is at most $4d^t$. The number of choices for the rest of the path is at most $d^{t-1}$, so there are at most $4(8+2^{2/\ell})\ell d^{2t-\eps}$ choices for this case.
\end{enumerate}
Summing over the above cases, the overall number of choices for the path $x_0\dots x_{t+1}$ is at most $(112+12\cdot 2^{2/\ell})\ell d^{2t-\eps}$
giving the required bound on $T$.
\end{proof}

\begin{proof}[Proof of Theorem~\ref{thm:odd}]
By the probabilistic construction described in~\cite{AlMo02}, it suffices to prove only the upper bound in the statement. 
Moreover, we may assume $t\ge 3$ due to Johansson's result~\cite{Joh96} and our observation in~\cite{KaPi16}.

Let $\ell \ge 3t$ be odd, let $G=(V,E)$ be a graph of maximum degree at most $d$ such that $G$ contains no $C_\ell$ as a subgraph, and let $x\in V$.
Any path all of whose non-endpoint vertices are in $A_{\ge t}$ we call {\em peripheral}.
Let $T$ denote the number of pairs of distinct vertices in $A_t$ that are connected by a peripheral path of length $t$.
For the same reasons as discussed at the beginning of the section, it suffices for the proof to show that $T\le Cd^{2t-1}$ where $C$ is a constant independent of $d$, by Lemma~\ref{lem:AKS}.

Since $\ell$ is odd, we may write $\ell = 3t+2k$ for some non-negative integer $k$.
For $j\in\{0,1,\dots,2k\}$, let us call a vertex $v\in A_t$ {\em $j$-implantable} if it is the endpoint of some peripheral path of length $j$, the other endpoint of which (if it exists) is in $A_t\setminus\{v\}$.

Fix $v$ to be a $2k$-implantable vertex and $P=v_0v_1\dots v_{2k}$ a path certifying its implantability, so that $v_0=v$ and (if $k>0$) $v_{2k}\in A_t\setminus\{v\}$.
Notice that crudely the number of peripheral paths of length $t$ starting at $v$ which intersect $P$ at another vertex is at most $2ktd^{t-1}$.
Now consider the set $Y \subseteq A_t\setminus \{v\}$ such that there is a peripheral path of length $t$ between $v$ and $y$ that does not intersect $P$ except at $v$ for all $y\in Y$.
Let us see that there is some vertex $a_Y\in A_1$ such that $a_Y$ is an ancestor of every element of $Y$ in the tree $\BFS$.
If not, then there exist distinct $y_1,y_2\in Y$ such that the lowest common ancestor of $y_1$ and $y_2$ in $\BFS$ is $x$. Since $\BFS$ is a tree,  the lowest common ancestor of $v_{2k}$ and $y_1$ (without loss of generality) is also $x$. But then we have found a cycle of length $3t+2k$ that contains $x$, $v_{2k}$, $v$, $y_1$, in that order, a contradiction. Thus $|Y| \le d^{t-1}$, the number of pairs with $v$ that are counted by $T$ is at most $(1+2kt)d^{t-1}$, and the number of pairs with a $2k$-implantable vertex that are counted by $T$ is at most $(1+2kt)d^{2t-1}$.

Observe we are already done if $k=0$, so assume from here on that $k>0$.

It remains for us to (crudely) count the number of paths $z_0\dots z_t$ of length $t$ that have two distinct non-$2k$-implantable endpoints $z_0,z_t \in A_t$.
Let $\kappa_0 = t$ if $k \bmod{t} = 0$ and $\kappa_0 = k \bmod{t}$ otherwise. We organise our count according to a parameter $j < 2k$ defined as the largest integer such that $j\ge 2\kappa_0$, $j \equiv 2\kappa_0 \pmod{t}$, and one of $z_0$ and $z_t$ is $j$-implantable. If this is not defined, then we know that $z_0$ and $z_t$ are not $2\kappa_0$-implantable so we may let $j=0$.

If $j=0$, then trivially the number of choices for $z_0$ is at most $d^t$ and the number of choices for the sub-path $z_0\dots z_{t-\kappa_0}$ is $d^{t-\kappa_0}$. Given $z_{t-\kappa_0}$, the choice for the remainder subpath $z_{t-\kappa_0}\dots z_t$ is restricted by the fact that $z_t$ may not be $2\kappa_0$-implantable; in particular, all such sub-paths must intersect at a vertex other than $z_{t-\kappa_0}$. It follows that the number of choices for $z_0\dots z_t$ in this case is at most $d^t\cdot d^{t-\kappa_0} \cdot \kappa_0^2d^{\kappa_0-1} = \kappa_0^2d^{2t-1}$.

So suppose that $j > 0$. By the definition of $j$, we deem one of $z_0$ and $z_t$ to be $j$-implantable.
Fix $v$ to be a $j$-implantable vertex and $P=v_0v_1\dots v_j$ a path certifying its implantability, so that $v_0=v$ and $v_j\in A_t\setminus\{v\}$. The number of peripheral paths of length $t$ starting at $v$ which intersect $P$ is at most $jtd^{t-1}$.
Consider the set $Y \subseteq A_t\setminus \{v\}$ such that there is a peripheral path of length $t$ between $v$ and $y$ that does not intersect $P$ except at $v$ for all $y\in Y$. Every $y\in Y$ is $(j+t)$-implantable, as certified by the path $P$ concatenated with the path certifying $y\in Y$.
Recalling the definition of $j$, in particular that neither $z_0$ nor $z_t$ may be $(j+t)$-implantable, we have just shown that the number of choices for $z_0\dots z_t$ in this case is at most $2jtd^{2t-1}$.

By summing over all possible $j$, we obtain that the overall number of choices for $z_0\dots z_t$ is at most
$\left(\kappa_0^2 + 2\sum_{\iota=0}^{(2k-2\kappa_0)/t} (2\kappa_0 + \iota t)t\right) d^{2t-1}$.
It therefore follows that $T\le \left(1+2kt+\kappa_0^2 + 2\sum_{\iota=0}^{(2k-2\kappa_0)/t} (2\kappa_0 + \iota t)t\right)d^{2t-1}$, as required.
\end{proof}

Our impression is that it might be possible to improve upon the value $3t$ in Theorem~\ref{thm:odd}; however, in order to do so, it seems one would have to take a different approach.
This is because of a simple construction of a $d$-regular graph $G$ with no odd cycle of length less than $3t$ such that $G^t$ does not satisfy the density conditions demanded by Lemma~\ref{lem:AKS}.
Roughly, we take the main example of Proposition~\ref{lem:construction} but around a cycle of length $3t$ rather than of length $t$.
More precisely, the vertex set is $\cup_{i=0}^{3t-1}U^{(i)}$ where each $U^{(i)}$ is a copy of $[d/2]^t$.
For all $i\in\{0,\dots,3t-1\}$, we join an element $(x^{(i)}_0,\dots,x^{(i)}_{t-1})$ of $U^{(i)}$ and an element $(x^{(i+1 \bmod 3t)}_0,\dots,x^{(i+1 \bmod 3t)}_{t-1})$ of $U^{(i+1 \bmod 3t)}$ by an edge if the $t$-tuples agree on all symbols except possibly at coordinate $i \bmod t$.
It is straightforward to check that $G^t$ is a graph in which all vertices have degree $\Theta(d^t)$ and every neighbourhood is spanned by $\Theta(d^{2t})$ edges, meaning that Lemma~\ref{lem:AKS} is ineffective here.
But neither is $G$ an example to certify sharpness of the value $3t$ in Theorem~\ref{thm:odd}, since it is also straightforward to check that $\chi_t(G) =o(d^t)$.

\section{Concluding remarks and open problems}\label{sec:conclusion}

Our goal was to address the question, what is the asymptotically largest value of $\chi_t(G)$ or of $\chi'_t(G)$ among graphs $G$ with maximum degree at most $d$ containing no cycle of length $\ell$, where $d\to\infty$?
The case $t=1$ for both parameters and the case $t=2$ for $\chi'_t$ followed from earlier work, but we showed more generally that for each fixed $t$ this question for both parameters can be settled apart from a finite number of cases of $\ell$.
These exceptional cases are a source of mystery. We would be very interested to learn if the cycle length constraints $2t$, $2t+2$ and $3t$ in Theorems~\ref{thm:even} and~\ref{thm:odd} could be weakened (or not).

More specifically, writing $\chi_t(d,\ell) = \sup\{\chi_t(G) \given \Delta(G)\le d, G \supsetneq C_\ell\}$ and $\chi'_t(d,\ell) = \sup\{\chi'_t(G) \given \Delta(G)\le d, G \supsetneq C_\ell\}$, the following questions are natural, even if there is no manifest monotonicity in $\ell$.
\begin{enumerate}
\renewcommand{\theenumi}{{\rm\arabic{enumi}.}}
\item For each $t\ge 1$, is there a critical even $\ell^{\text{e}}_t$ such that for any even $\ell$, if $\ell < \ell^{\text{e}}_t$ then $\chi_t(d,\ell) = \Theta(d^t)$, while if $\ell \ge \ell^{\text{e}}_t$ then $\chi_t(d,\ell) = \Theta(d^t/\log d)$?
\item For each $t\ge 2$, is there a critical even $\ell'_t$ such that for any even $\ell$, if $\ell < \ell'_t$ then $\chi'_t(d,\ell) = \Theta(d^t)$, while if $\ell \ge \ell'_t$ then $\chi'_t(d,\ell) = \Theta(d^t/\log d)$?
\item For each $t\ge 1$ odd, is there a critical odd $\ell^{\text{o}}_t$ such that for any odd $\ell$, if $\ell < \ell^{\text{o}}_t$ then $\chi_t(d,\ell) = \Theta(d^t)$, while if $\ell \ge \ell^{\text{o}}_t$ then $\chi_t(d,\ell) = \Theta(d^t/\log d)$?
\end{enumerate}
We knew from before that $\ell^{\text{e}}_1=4$, $\ell^{\text{o}}_1=3$, $\ell^{\text{e}}_2=6$, $\ell'_2=4$, $\ell'_3=6$, $\ell'_4=8$, and $\ell'_6=12$. In this paper, we showed that there are linear in $t$ upper bounds on all these critical values, provided the values are well-defined.

The above three questions are natural analogues to open questions of Alon and Mohar~\cite{AlMo02} and of Kaiser and the first author~\cite{KaKa14} that ask for a critical girth $g_t$ (resp.~$g'_t$) for which there is an analogous decrease in the asymptotic extremal behaviour of the distance-$t$ chromatic number (resp.~index).
If these critical values all exist, it would be natural to think that $g_t = \min\{\ell^{\text{e}}_t,\ell^{\text{o}}_t\}$ and $g'_t=\ell'_t$, and moreover, if $t$ is odd, that $|\ell^{\text{o}}_t-\ell^{\text{e}}_t|=1$. But there is limited evidence for the existence questions, let alone this stronger set of assertions.
We have already established other lower bounds for these hypothetical critical values in~\cite{KaPi16}, but for none of these critical values is there any general construction known to certify a lower bound that is unbounded as $t\to\infty$ .

As mentioned in the introduction, Vu~\cite{Vu02} proved that the exclusion of any fixed bipartite graph is sufficient for a $O(d^2/\log d)$ upper bound on the strong chromatic index of graphs of maximum degree $d$. One might wonder, similarly, for each $t\ge 2$ is there a natural wider class of graphs than sufficiently large cycles (of appropriate parity) whose exclusion leads to asymptotically better upper bounds on the distance-$t$ chromatic number or index?

\bibliographystyle{abbrv}
\bibliography{distcycle}

\begin{thebibliography}{10}

\bibitem{AKS99}
N.~Alon, M.~Krivelevich, and B.~Sudakov.
\newblock Coloring graphs with sparse neighborhoods.
\newblock {\em J. Combin. Theory Ser. B}, 77(1):73--82, 1999.

\bibitem{AlMo02}
N.~Alon and B.~Mohar.
\newblock The chromatic number of graph powers.
\newblock {\em Combin. Probab. Comput.}, 11(1):1--10, 2002.

\bibitem{Bol78}
B.~Bollob{\'a}s.
\newblock {\em Extremal graph theory}, volume~11 of {\em London Mathematical
  Society Monographs}.
\newblock Academic Press, Inc. [Harcourt Brace Jovanovich, Publishers],
  London-New York, 1978.

\bibitem{BoSi74}
J.~A. Bondy and M.~Simonovits.
\newblock Cycles of even length in graphs.
\newblock {\em J. Combinatorial Theory Ser. B}, 16:97--105, 1974.

\bibitem{BuJi17}
B.~Bukh and Z.~Jiang.
\newblock A {B}ound on the {N}umber of {E}dges in {G}raphs {W}ithout an {E}ven
  {C}ycle.
\newblock {\em Combin. Probab. Comput.}, 26(1):1--15, 2017.

\bibitem{Erd88}
P.~Erd{\H{o}}s.
\newblock Problems and results in combinatorial analysis and graph theory.
\newblock In {\em Proceedings of the {F}irst {J}apan {C}onference on {G}raph
  {T}heory and {A}pplications ({H}akone, 1986)}, volume~72, pages 81--92, 1988.

\bibitem{FeHi64}
W.~Feit and G.~Higman.
\newblock The nonexistence of certain generalized polygons.
\newblock {\em J. Algebra}, 1:114--131, 1964.

\bibitem{Joh96}
A.~Johansson.
\newblock Asymptotic choice number for triangle-free graphs.
\newblock Technical Report 91-5, DIMACS, 1996.

\bibitem{KaKa14}
T.~Kaiser and R.~J. Kang.
\newblock The distance-{$t$} chromatic index of graphs.
\newblock {\em Combin. Probab. Comput.}, 23(1):90--101, 2014.

\bibitem{KaPi16}
R.~J. Kang and F.~Pirot.
\newblock Coloring {P}owers and {G}irth.
\newblock {\em SIAM J. Discrete Math.}, 30(4):1938--1949, 2016.

\bibitem{Mah00}
M.~Mahdian.
\newblock The strong chromatic index of {$C\sb 4$}-free graphs.
\newblock {\em Random Structures Algorithms}, 17(3-4):357--375, 2000.

\bibitem{Pik12}
O.~Pikhurko.
\newblock A note on the {T}ur\'an function of even cycles.
\newblock {\em Proc. Amer. Math. Soc.}, 140(11):3687--3692, 2012.

\bibitem{Viz68}
V.~G. Vizing.
\newblock Some unsolved problems in graph theory.
\newblock {\em Uspehi Mat. Nauk}, 23(6 (144)):117--134, 1968.

\bibitem{Vu02}
V.~H. Vu.
\newblock A general upper bound on the list chromatic number of locally sparse
  graphs.
\newblock {\em Combin. Probab. Comput.}, 11(1):103--111, 2002.

\end{thebibliography}
\end{document}